\newcommand\R{\mathbb{R}}
\newcommand\pa{\partial}
\numberwithin{equation}{section}
\newtheorem{proposition}{Proposition}[section]
\newtheorem{lemma}{Lemma}[section]
\newtheorem{theorem}{Theorem}[section]
\newtheorem{remark}{Remark}[section]
\begin{document}
\title[Wave equation on some conical manifolds]{$L^p$-estimates for the wave equation with critical magnetic potential on conical manifolds}

\author{Xiaofen Gao}
\address{School of Mathematics and Statistics, Zhengzhou University, Zhengzhou, Henan, China, 450001; }
\email{gaoxiaofen@zzu.edu.cn}

\author{Jialu Wang}
\address{School of Mathematics and Statistics, Beijing Institute of Technology, Beijing, China, 100081; }
\email{jialu\_wang@bit.edu.cn}

\author{Chengbin Xu}
\address{School of Mathematics and Statistics, Qinghai Normal University, Xining, Qinghai, China, 810008; }
\email{xcbsph@163.com}

\author{Fang Zhang}
\address{Department of Mathematics, Beijing Institute of Technology, Beijing 100081}
\email{zhangfang@bit.edu.cn}

\begin{abstract}In this paper, we consider a class of conical singular spaces $\Sigma=(0,\infty)_r\times Y$ equipped with the metric $g=\mathrm{d}r^2+r^2h$, where the cross section $Y$ is a compact $(n-1)$-dimensional closed Riemannian manifold $(Y,h)$ without boundary. In this context, we aim to show that the sine wave propagator $\sin\left(t\sqrt{\mathcal{L}_{\mathbf{A}}}\right)/\sqrt{\mathcal{L}_{\mathbf{A}}}$ is bounded in $L^{p}(\Sigma)$, where $\mathcal{L}_{\mathbf{A}}$ is a magnetic Schr\"odinger operator with a scaling-critical magnetic potential on metric cone $\Sigma$. Our main result is the generalization of the result in \cite{L}. The novel ingredient is the
construction of Hadamard parametrix for $\cos\left(t\sqrt{\mathcal{L}_{\bf A}}\right)$ on $\Sigma$.

\end{abstract}

 \maketitle

\begin{center}
 \begin{minipage}{120mm}
   { \small {{\bf Key Words:} Critical magnetic potential, Hadamard parametrix, $L^{p}$-estimate, Conical manifolds.}
      {}
   }
 \end{minipage}
 \end{center}

 \tableofcontents

\section{Introduction}
In the present paper, we continue our previous program in \cite{WZZ} to investigate the $L^p$-estimates for the following wave equations perturbed by critical magnetic potentials on conical singular space $\Sigma$
\begin{align}\label{LS}
\begin{cases}
\pa_{tt} u+\mathcal{L}_{\bf A}u=0,\quad & (t,x)\in \R\times \Sigma,\\
u(0,x)=0,\quad &\pa_t u(0,x)=g(x),\quad x\in \Sigma.
\end{cases}
\end{align}
More precisely, under the restricted  condition of the injectivity radius $\rho>\pi$, we have proved the  $L^p$-estimates  of the wave operators with critical magnetic potentials hold for $|\frac1p-\frac12|<\frac1{n-1}$ in the metric cone setting, which extends the results in \cite{L}.

\subsection{The setting and motivation } The metric cone $\Sigma=C(Y)$ on $Y$ is a product space $(0,\infty)_r\times Y$ endowed with the metric $g=dr^2+r^2h$, where $Y$ is an $n-1$-dimensional compact connected Riemannian manifold without boundary satisfying $n-1\geq2$. The setting cone $\Sigma$ is a simple conical singular space, with the Euclidean space $\R^n$ serving as its most straightforward example where the cross section is $Y=\mathbb{S}^{n-1}$ equipped with the standard metric $d\theta^2$. Notably, general metric cones exhibit a dilation symmetry analogous to that of Euclidean space, but typically lack any other symmetries. 
On metric cone $\Sigma$, the operator $\mathcal{L}_{\bf A}$, perturbed by magnetic potential, is a conical singular operator given by
\begin{equation}\label{op-L}
\mathcal{L}_{\bf A}=-\pa_r^2-\frac{n-1}{r}\pa_r+\frac{(i\nabla_y+{\bf A}(y))^2}{r^2},
\end{equation}
where ${\bf A}: C^\infty(Y)\to\R^{n-1}$, and $(i\nabla_y+{\bf A})^2$ denotes the Schr\"odinger operator with magnetic potentials on $Y$. We consider its Friedrichs extension in this paper. It's worth remarking that the magnetic potential studied here is doubly critical, which are the scaling invariance and singularity of the potential. These features introduce additional  difficulties in various research fields, see \cite{FFFP,FFFP1,FZZ,GYZZ} and reference therein for dispersive and Strichatz estimates of Schr\"odinger, wave and Kelin-Gordorn equations; see \cite{BPST,FZZ1} for uniform resolvent estimates.
The operator $\mathcal{L}_{\bf A}$ without potential, known as positive Laplace-Beltrami operator $\Delta_g$, has received extensive attentions from different mathematical perspectives, contributing significant results. For example, wave diffraction phenomenon has been considered in \cite{CT} by Cheeger and Taylor; Riesz transform and heat kernel have been found in \cite{HR,LJFA,LBSM,MJFA}; Blair-Ford-Marzuola \cite{BFM,F} derived Strichartz estimate for flat cone $C(\mathbb{S}^1_\rho)$, and then Zhang-Zheng \cite{ZZ2017,ZZ2020} subsequently generalized this result to metric cones; Additionally, Li \cite{LMZ} and M\"uller-Seeger \cite{MS1} have established the $L^p$ regularity estimate, which is one of motivation in our paper.
\vspace{0.2cm}

The main goal of studying the $L^p$-estimates for the wave operators in flat Euclidean space $\R^n$ is to determine the range of $p$ for which the following inequality holds
\begin{equation}\label{est-sin}
\left\|\frac{\sin(t\sqrt{-\Delta})}{\sqrt{-\Delta}}g\right\|_{L^p(\R^n)}\leq C_{p,t} \left\|g\right\|_{L^p(\R^n)},\quad \forall g\in L^p(\R^n),
\end{equation}
where $C_{p,t}$ is a constant depending on two parameters $p$ and time $t$.
This inequality was first studied by Sj\"ostrand \cite{S}, in which \eqref{est-sin} was valid for $p_0<p<p_0'$, but failed when $p<p_0$ or $p>p_0$ with $p_0$ being the critical index. However, these results did not contain the endpoint case, that is $p=p_0$ or $p_0'$. Fortunately, Persal \cite{P} and Miyachi \cite{M} solved this remaining issues, and so obtained that \eqref{est-sin} holds for the sharp range of $p$, namely $|\frac1p-\frac12|\leq \frac1 {n-1}$. The sharp $L^p$-estimates are often applied to study the almost everywhere convergence problems, which are central issues in the field of harmonic analysis, see \cite{PF} and the references therein for more details.

\vspace{0.2cm}

Considering the significant research implications of $L^p$-estimates, it seems natural to seek an analogue of \eqref{est-sin} on other manifolds or Schr\"odinger operators with potentials. Actually, over the past few decades, many mathematicians have studied the $L^p$-estimates for Schr\"odiger operators with potential, making it impractical to cite all relevant works here. For the Hermite operator $-\Delta+|x|^2$, related results were presented in \cite{NT}, and more general operators of the form $-\Delta+V$ were investigated by Zhong \cite{Z}. For the  Grushin operator $G=-\Delta-|x|^2\partial_{t}^2$, K. Jotsaroop and S. Thangavelu \cite{KT} established the $L^p$-estimates by proving that the operator $\frac{\sin\sqrt{G}}{G}$ is bounded on $L^p(\R^{n+1})$ for every $p$ satisfying $|\frac{1}{p}-\frac{1}{2}|<\frac{1}{n+2}$. Additionally, in the subsequent work \cite{TN}, they derived the $L^p\rightarrow L^2$ estimate for wave equation with the Grushin operator via employing the $L^p$ boundedness of Riesz transform. We also emphasize the Schr\"odinger operators perturbed by a scaling-critical potential, such as Aharonov-Bohm magnetic potential and inverse-square electric potentials \cite{FFFP}, which  have attracted considerable research interest in recent years. Nevertheless, establishing $L^p$-estimates is not a straightforward work because of the doubly critical properties as above stated. Very recently, by constructing the kernel function and proving the pointwise estimate of an analytic operator family, Wang et al. \cite{WZZ} demonstrated the $L^p$-estimates for the Schr\"odinger operators with Aharonov-Bohm magnetic potential in $\R^2$. And they applied this result to show the $L^p$-boundedness of the operators $(I+\mathcal{L}_{\bf A})^{-\gamma}e^{it\sqrt\mathcal{L}_{\bf A}}$ for $1<p<\infty$, when $\gamma>|\frac1p-\frac12|$. We also refer to \cite{JZ1,JZ2} for Schr\"odinger operators with critical electric potential in higher dimensions.\vspace{0.2cm}

There are also numerous literature studying $L^p$-estimates on various manifolds, including the metric cone space considered in this paper. Specially, Li and Lohoue \cite{LN} established $L^p$-estimates for the wave equation in the setting, assuming that $\rho>\pi$, where $\rho$ is the injective radius of the closed section $Y$. Later, employing the properties of spectral multipliers in the Riemannian manifold framework, Li \cite{L} improved this result by removing the restricted conditions on $\rho$, and then obtained
\begin{equation}
\|(I+\Delta_g)^{-\frac\gamma2}e^{it\sqrt{\Delta_g}}f\|_{L^p(\Sigma)}\leq C(p,\gamma)(1+t)^{\gamma}\|f\|_{L^p(\Sigma)},
\end{equation}
for $1<p<\infty$ satisfying $(n-1)|\frac1p-\frac12|<\gamma$, where $\Delta_g$ is the positive Laplace-Beltrami operator. It is important to note that the results in \cite{LN, L} are valid for $|\frac1p-\frac12|<\frac1{n-1}$, except the endpoint $|\frac1p-\frac12|=\frac1{n-1}$. Recently, the end point case for dimension $n=3$ has been resolved by Pan and Fan \cite{PF}. However, they were unable to demonstrate the optimum result at the endpoint if $n\neq3$. Moreover, $L^p$-estimates for the wave equation have been extensively investigated in other various settings, including Heisenberg groups as explored in \cite{MS2}, symmetric spaces of non-compact type with real rank one as presented in \cite{GM, Ionescu}, and Damek-Ricci spaces as discussed in \cite{MV}.
For extra results on the wave equation in manifolds with conical singularities, refer to \cite{Mewun,MS1,SS1,SS2}. Motivated by the aforementioned papers and following the argument in \cite{LN,L}, we aim to establish the $L^p$-estimates for the wave equation \eqref{LS}, involving a scaling critical magnetic Schr\"odinger operator $\mathcal{L}_{\bf A}$ on the metric cone manifold. More precisely, we have the following main results.

\subsection{The main results}

In this paper, we concern the Cauchy problem for the wave equation on the metric cone
\begin{align}\label{LS}
\begin{cases}
\pa_{tt} u+\mathcal{L}_{\bf A}u=0,\quad & (t,x)\in \R\times C(Y),\\
u(0,x)=0,\quad &\pa_t u(0,x)=g(x),\quad x\in C(Y)
\end{cases}
\end{align}
where $\mathcal{L}_{\bf A}$ is defined by \eqref{op-L}, and represents the Schr\"odinger operator perturbed by magnetic potential on the cone. By spectral theory, the solution $u(t,x)$ can be written as
$$u(t,x)=\frac{\sin(t\sqrt{\mathcal{L}_{\bf A}})}{\sqrt{\mathcal{L}_{\bf A}}}g(x)
,\quad x\in C(Y),\ t>0.$$
Our main results are as follows.
\begin{theorem}\label{est-wave}
Let $Y$ be a compact Riemannian manifold without boundary, codimension $n-1\geq2$, and the injectivity radius $\rho>\pi$. Then, for $|\frac1p-\frac12|<\frac1{n-1}$ with $1<p<+\infty,$ there exists a constant $C_{p}>0$ such that
$$\Big\|\frac{\sin(t\sqrt{\mathcal{L}_{\bf A}})}{\sqrt{\mathcal{L}_{\bf A}}}g\Big\|_{L^p(C(Y))}\leq C_{p} t\|g\|_{L^p(C(Y))}.$$
\end{theorem}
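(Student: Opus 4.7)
The plan is to follow the Seeger--Sogge--Stein analytic interpolation strategy adapted to the conical setting, as was carried out in \cite{L} for the Laplace--Beltrami operator $\Delta_g$. Writing
$$\frac{\sin(t\sqrt{\LL_{\A}})}{\sqrt{\LL_{\A}}} g = \int_0^t \cos(s\sqrt{\LL_{\A}}) g\, ds,$$
I reduce the claimed bound to uniform $L^p$ bounds for $\cos(s\sqrt{\LL_{\A}})$ modulo a smoothing factor. The standard route is to introduce the analytic family
$$T_s^z = (\Id + \LL_{\A})^{-z/2} \cos(s\sqrt{\LL_{\A}})$$
and apply Stein's interpolation theorem between an $L^2 \to L^2$ bound on the line $\mathrm{Re}\, z = 0$ (immediate from the spectral theorem, since $\LL_{\A}$ is self-adjoint) and an $L^1 \to L^1$ (or $H^1 \to L^1$) bound on a line $\mathrm{Re}\, z = (n-1)/2 + \varepsilon$. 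Interpolation then yields the required $L^p$ bound for $|1/p - 1/2| < 1/(n-1)$; establishing the endpoint bound is the heart of the proof.

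The key tool for the endpoint is the Hadamard parametrix for $\cos(t\sqrt{\LL_{\A}})$ on $\Sigma$, whose construction is the novel ingredient flagged in the abstract. The hypothesis $\rho > \pi$ is used to ensure that on the relevant short-time interval, geodesics on the cone stay in a regime where standard oscillatory integral techniques apply and the cone tip does not interfere. I expect the parametrix to be an oscillatory integral whose phase is built from the geodesic distance $d(x,y)$ on $(\Sigma,g)$ and whose principal amplitude is a transport-equation solution modified by a holonomy factor encoding the magnetic connection $d - i\A$; lower-order corrections will satisfy inhomogeneous transport equations driven by $\LL_{\A}$ in place of $\Delta_g$. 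After a Littlewood--Paley decomposition $1 = \varphi_0(\sqrt{\LL_{\A}}) + \sum_{k \geq 1} \varphi(2^{-k}\sqrt{\LL_{\A}})$, the cosine propagator splits into a low-frequency piece (handled by an $L^p$ spectral multiplier theorem for $\LL_{\A}$) and dyadic high-frequency pieces whose Schwartz kernels are concentrated on the wave cone $d(x, y) \approx |t|$.

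For each high-frequency piece at scale $2^k$, stationary phase in the cotangent directions transverse to the wave cone yields a pointwise kernel bound of the form
$$|K_k(t, x, y)| \leq C\, 2^{k(n-1)/2} \bigl(1 + 2^k |d(x, y) - |t||\bigr)^{-N},$$
so that integrating in one spatial variable produces an $L^1 \to L^1$ norm of order $C\, 2^{k(n-1)/2}$. Absorbing this loss through the smoothing factor $(\Id + \LL_{\A})^{-z/2}$ on $\mathrm{Re}\, z > (n-1)/2$ and summing in $k$ gives the desired endpoint bound. For $|t|$ larger than the short-time threshold dictated by $\rho$, the bound is propagated by iterating via the double-angle identity
$$\cos\bigl((s + s')\sqrt{\LL_{\A}}\bigr) = 2 \cos(s\sqrt{\LL_{\A}}) \cos(s'\sqrt{\LL_{\A}}) - \cos\bigl((s - s')\sqrt{\LL_{\A}}\bigr);$$
the linear-in-$t$ factor in the statement then appears after integrating in $s$.

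The main obstacle, and the actual novelty, lies in the parametrix construction itself in the presence of the scaling-critical magnetic potential. Since $(i\nabla_y + \A(y))^2/r^2$ has the same homogeneity as the conical Laplacian, it is not a perturbation in any standard sense and must be treated on equal footing with the principal part. I expect the main technical work to be: verifying that the transport equations for the parametrix amplitudes can be solved along the geodesics of $(\Sigma, g)$ with the magnetic holonomy factor built in, checking that the amplitudes satisfy the symbol estimates required for the stationary phase bound above, and ensuring that the remainder is smoothing to sufficiently high order so as not to disturb the dyadic sum. The role of the assumption $\rho > \pi$ is precisely to keep the geometry of the geodesic flow on $\Sigma$ simple enough during the short-time window for this parametrix to be valid.
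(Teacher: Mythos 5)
Your plan rests on a Hadamard parametrix for $\cos(t\sqrt{\mathcal{L}_{\bf A}})$ built directly on the cone $\Sigma$, valid on a short-time window and propagated by the double-angle identity. On a metric cone this cannot work uniformly: $\mathcal{L}_{\bf A}$ is scale invariant, so ``short time'' has no uniform meaning --- a wave launched at radius $r$ reaches the tip by time $r$, and beyond that the kernel lives in the diffractive region $t>r+r^\prime$, where no geometric-optics parametrix (with or without a magnetic holonomy factor) describes it; iterating via the double-angle formula does not repair this, since the individual factors $\cos(s\sqrt{\mathcal{L}_{\bf A}})$ are not $L^p$-bounded and each iteration loses derivatives. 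The paper never constructs a parametrix on $\Sigma$: it computes the kernel of the analytic family $F_{\omega,t}(\mathcal{L}_{\bf A})$ of \eqref{equ:fanaoper} exactly through the Cheeger--Taylor/Hankel-transform formula \eqref{kernel-op}, splits it into the region $|r-r^\prime|<t<r+r^\prime$ and the diffractive region $t>r+r^\prime$, and handles the latter (the operator $T_1$ in \eqref{T1}) by Legendre-$Q$ asymptotics and bounds for $e^{-h\nu}\cos(\pi\nu)$ on $Y$. The Hadamard parametrix that is actually constructed is for $\cos(s\nu)$, $\nu=\sqrt{(i\nabla_y+{\bf A})^2+\beta^2}$, on the \emph{compact cross-section} $Y$, and this is exactly where $\rho>\pi$ enters: the angular variable $A$ in \eqref{kernel-op} ranges over $(0,\pi)$, so the parametrix on $Y$ must be valid up to time $\pi$; $\rho$ is the injectivity radius of $Y$, not a condition taming geodesics of $\Sigma$ near the tip as you use it.

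Two further steps would fail as written. First, the reduction $\sin(t\sqrt{\mathcal{L}_{\bf A}})/\sqrt{\mathcal{L}_{\bf A}}=\int_0^t\cos(s\sqrt{\mathcal{L}_{\bf A}})\,ds$ combined with bounds for $(\Id+\mathcal{L}_{\bf A})^{-z/2}\cos(s\sqrt{\mathcal{L}_{\bf A}})$ yields at best $\|u(t)\|_{L^p}\lesssim t\,\|(\Id+\mathcal{L}_{\bf A})^{\gamma/2}g\|_{L^p}$, not the asserted $t\|g\|_{L^p}$: the one order of smoothing must come from the multiplier $\sin(t\lambda)/(t\lambda)$ itself, which is precisely what the paper's family $F_{\omega,t}=(\pi/2)^{1/2}(t\sqrt{\mathcal{L}_{\bf A}})^{\omega-\frac n2}J_{\frac n2-\omega}(t\sqrt{\mathcal{L}_{\bf A}})$ encodes, since it equals $\sin(t\sqrt{\mathcal{L}_{\bf A}})/(t\sqrt{\mathcal{L}_{\bf A}})$ at $\omega=\frac{n-1}2$, before Stein interpolation and Riesz convexity are applied. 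Second, both your low-frequency piece and any repair of the previous point would invoke an $L^p$ spectral multiplier theorem for $\mathcal{L}_{\bf A}$; such theorems need heat-kernel (or comparable) bounds which, as the authors explicitly remark, are not available for $\mathcal{L}_{\bf A}$ --- this is the very reason the hypothesis $\rho>\pi$ cannot yet be removed, and why the paper follows the Li--Lohou\'e-type kernel argument rather than the multiplier-theorem route you sketch.
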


Now we give the strategy of proof here. For fixed $t>0$, we denote the family of analytic operators on $C(Y)$:
\begin{align}\label{equ:fanaoper}
F_{\omega,t}(\mathcal{L}_{\bf A})=(\pi/2)^{\frac12}(t\sqrt{\mathcal{L}_{\bf A}})^{\omega-\frac{n}2}J_{\frac{n}2-\omega}
         (t\sqrt{\mathcal{L}_{\bf A}}),
 \end{align}
where $\omega=\epsilon +s(\frac{n+1}2-\epsilon)i$ with $s\in\R.$
Similar to the approach in \cite{L, LN}, we make use of a combination of the family of analytic operators (see \eqref{equ:fanaoper} above) with Stein's interpolation theorem to establish Theorem \ref{est-wave}.
Then the problem can be reduced to verify the following result:
\begin{theorem}\label{theo-Lp}
Let $1\leq p\leq+\infty$ and $\frac12<\epsilon<1$. Then, there exists a constant $C_{\epsilon}>0$ such that for $t>0$ and for all $g\in L^p(C(Y))$, there holds
\begin{align}
\|F_{\omega,t}(\mathcal{L}_{\bf A})g\|_{L^p}\leq C_{\epsilon}\|g\|_{L^p},
\end{align}
where $\omega=\epsilon +s(\frac{n+1}2-\epsilon)i$ with $s\in\R.$
\end{theorem}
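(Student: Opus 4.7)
My plan has three stages --- scaling, Poisson representation combined with a Hadamard parametrix, and kernel integration --- and the first stage is quick. The cone's dilation symmetry removes the parameter $t$: the substitution operator $T_\lambda f(r, y) := f(\lambda r, y)$ satisfies the intertwining $T_\lambda^{-1} \LL_{\A} T_\lambda = \lambda^2 \LL_{\A}$ and has $L^p$-operator norm $\lambda^{-n/p}$, so functional calculus yields $F_{\omega, t}(\LL_{\A}) = T_{1/t}\, F_{\omega, 1}(\LL_{\A})\, T_t$; the dilation factors cancel and $\|F_{\omega, t}(\LL_{\A})\|_{L^p \to L^p} = \|F_{\omega, 1}(\LL_{\A})\|_{L^p \to L^p}$. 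It therefore suffices to treat $t = 1$.

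Next, since $\mathrm{Re}(\omega) = \epsilon < 1 < (n+1)/2$, Poisson's integral representation of $J_{n/2 - \omega}$ gives
\begin{equation*}
F_{\omega, 1}(\LL_{\A}) \;=\; \frac{2^{\omega - (n+1)/2}}{\Gamma\bigl(\tfrac{n+1}{2} - \omega\bigr)} \int_{-1}^{1} (1 - s^2)^{\frac{n}{2} - \omega - \frac{1}{2}} \, e^{i s \sqrt{\LL_{\A}}} \, ds,
\end{equation*}
reducing the problem to controlling $\cos(s\sqrt{\LL_{\A}})$ for $s \in [-1, 1]$. I would then build a Hadamard-type parametrix for this propagator, which is the paper's main new contribution: the hypothesis $\rho > \pi$ guarantees that geodesics on $(\Sigma, g)$ of length $\leq 1$ are well-behaved, so one constructs an expansion
\begin{equation*}
\cos\bigl(s\sqrt{\LL_{\A}}\bigr)(x, y) \;=\; \sum_{j=0}^{N} U_j(x, y)\, E_j\bigl(s, d(x, y)\bigr) \;+\; R_N(s, x, y),
\end{equation*}
where $d$ is the cone distance, the $E_j$ are Riesz distributions supported on $\{d(x, y) = |s|\}$, the transport coefficients $U_j$ solve first-order equations adapted to the magnetic connection $i \nabla_y + \A$ (thereby encoding the holonomy of $\A$), and $R_N$ is smoothing of arbitrarily high order.

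Finally, substituting the parametrix into the Poisson integral and evaluating the $s$-integration term by term produces a kernel
\begin{equation*}
F_{\omega, 1}(\LL_{\A})(x, y) \;\sim\; \sum_{j \geq 0} V_j(\omega; x, y) \bigl(1 - d(x, y)^2\bigr)_+^{-\omega + j} \;+\; (\text{smooth remainder}),
\end{equation*}
supported in $\{d(x, y) \leq 1\}$ by finite propagation, with smooth amplitudes $V_j$ whose residual $\Gamma$-factors cancel the $1/\Gamma((n+1)/2 - \omega)$ prefactor. This is the cone analogue of the Euclidean Miyachi--Peral kernel $(1 - |x - y|^2)_+^{-\omega}$. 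Because $\mathrm{Re}(\omega) = \epsilon < 1$, the singular factor $(1 - d^2)^{-\epsilon}$ is absolutely integrable near $d = 1$, and the condition $\rho > \pi$ controls the Jacobian of the cone exponential map on $\{d \leq 1\}$; a Schur test then yields $\|F_{\omega, 1}(\LL_{\A})\|_{L^p \to L^p} \leq C_\epsilon$ uniformly in $s \in \R$ and in $1 \leq p \leq \infty$. The principal obstacle is the Hadamard parametrix construction: one must set up and solve the transport equations for the $U_j$ consistently with the singular magnetic connection on $Y$, and verify that the remainder $R_N$ is genuinely smoothing despite the cone tip --- this is the technical core of the paper.
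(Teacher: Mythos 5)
Your first two steps are sound (the dilation reduction to $t=1$ is legitimate, and the Poisson-type subordination of $F_{\omega,1}(\LL_{\A})$ to $\cos\bigl(s\sqrt{\LL_{\A}}\bigr)$, $|s|\le 1$, is morally the same move the paper makes through the Mehler--Dirichlet integral representations \eqref{gamma}--\eqref{gammaQ} of the Legendre functions), but the core of your plan --- a geometric Hadamard parametrix for $\cos\bigl(s\sqrt{\LL_{\A}}\bigr)$ \emph{on the cone} $\Sigma$ --- has a genuine gap. The hypothesis $\rho>\pi$ is the injectivity radius of the compact cross-section $Y$, not of $\Sigma$: the cone metric $g=dr^2+r^2h$ is singular at the tip, the injectivity radius of $\Sigma$ at a point $(r,y)$ degenerates like $r$, and scaling to $t=1$ buys nothing because the cone is dilation invariant, so the region $d(x,y)\le 1$ always contains points arbitrarily close to the tip. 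In that region the propagator is \emph{not} of the geometric form $\sum_j U_j(x,y)E_j(s,d(x,y))+(\text{smoothing})$: by Cheeger--Taylor there is a diffracted wave emanating from the cone point, and no choice of transport coefficients makes the remainder $R_N$ smoothing uniformly near $r=0$. Your own flagged ``technical core'' (solving the transport equations and proving $R_N$ smoothing despite the tip) is precisely the step that cannot be carried out as stated, and your final kernel ansatz $(1-d(x,y)^2)_+^{-\omega}$ misses the diffractive contribution entirely.

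The paper avoids this by never constructing a parametrix on $\Sigma$. It separates variables: the kernel of $F_{\omega,t}(\LL_{\A})$ is written as a Hankel-type triple-Bessel integral, evaluated by the Weber--Schafheitlin formula \eqref{formula:Ma} into Legendre functions, and then split into two regimes. In the geometric regime $|r-r^\prime|<t<r+r^\prime$ the kernel is an integral of $(\cos h-\cos A)^{\beta-\omega}\cos(h\nu)(y,y^\prime)$ with $h\le A<\pi$, and the Hadamard parametrix is built for $\cos(h\nu)$ on the \emph{compact cross-section} $Y$ with $\nu=\sqrt{(i\nabla_y+\A)^2+\beta^2}$ --- this is exactly where $\rho>\pi$ enters, since $A$ can approach $\pi$. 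In the diffractive regime $t>r+r^\prime$ the kernel involves $e^{-h\nu}\cos(\pi\nu)$ and is controlled by heat-kernel subordination bounds on $Y$ (Lemmas \ref{lem:kerest1}--\ref{lem:kerest2}); your proposal has no counterpart to this $T_1$ analysis. Both pieces are then handled by Schur tests, as in your last step. To repair your argument you would either have to import the Cheeger--Taylor functional calculus on the cone (which is what the paper's kernel formula \eqref{kernel-op} amounts to) or restrict the parametrix to the cross-section; as written, the proposal does not prove the theorem.
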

In the proof process of Theorem \ref{theo-Lp}, the properties of Legendre functions play a fundamental role, and the novel ingredient is the construction of the Hadamard parametrix for $\cos(t\sqrt{\mathcal{L}_{\bf A}})$.


\begin{remark} In this paper, we are unable to obtain that the main result holds for $\rho<\pi$ due to the lack of heat kernel estimates for $\mathcal{L}_{\bf A}$, as well as for the endpoint case $|\frac1p-\frac12|=\frac1{n-1}$. Thus, we leave these as open questions for our future study.
\end{remark}

\subsection{The structure of the paper} This paper is organized as follows. Section \ref{Preliminaries} is devoted to presenting the kernel of operator $F_{\omega,t}(\mathcal{L}_{\bf A})$, utilizing the formula for computing the kernel of $\varphi(-\Delta_g)$ on the conical manifold, as outlined in \cite[p.173]{T86} and \cite[p.276]{CT82}. Section \ref{proof-2} contains the proof of Theorem \ref{theo-Lp} and Theorem \ref{est-wave}. Meanwhile, the proof of some crucial Lemmas and Proposition \ref{lemT}, established in Section \ref{proof-2}, is provided in Section \ref{proof12} and Section \ref{proof} respectively. In the Appendix, we construct the Hadamard parametrix for the Schr\"odinger operator with scaling-critical magnetic potential on the compact Riemannian manifold.

\section{Preliminaries}\label{Preliminaries}

This section mainly provides the representation of the kernel for the operator $F_{w,t}(\mathcal{L}_{\bf A})$, which will be essential in the following proof process. We begin with introducing some notations throughout this paper.

\subsection{Notations}  We utilize $C$ to denote the universal constant, which may change from one line to another. If a constant $C$ depends on a special parameter, we shall denote it explicitly by subscripts. For instance,
$C_\epsilon$ should be understood as a positive constant
depending on $\epsilon$. For convenience, we also use $A\lesssim B$ to denote the statement that $A\leq CB$ for some large constant $C$ which may vary
from line to line. In particular, we make use of $A\lesssim_\epsilon B$ to denote $A\leq C B$ with $C$ depending on parameter $\epsilon$.
In addition, we will adopt the notations for special functions as outlined in \cite{W}. For example, we employ the Gamma function by $\Gamma(z)$, and the Legendre functions by $P_\lambda^\mu(z)$ with $|z|<1$ and $Q_\lambda^\mu(z)$ with $|z|>1$. Throughout
this paper,  the operator on $Y$ is defined by
\begin{equation}\nu=\sqrt{(i\nabla_y+{\bf A})^2+\beta^2},\end{equation}
where
\begin{equation}
\beta=\tfrac{n-2}2,\qquad \omega=\epsilon+is\Big(\frac{n+1}2-\epsilon\Big),
\end{equation}
and $\frac12<\epsilon<1$,\  $s\in\R$.

We denote $d$(resp. $d_Y)$ to be the Riemannian distance induced by $\Sigma$ (resp. $Y$),  and we also denote $d\mu(r,x)$ (resp. $d\mu_Y(y)$) by the  Riemannian measure induced by $\Sigma$ (resp. $Y$). We obviously have:
\begin{equation}\label{equ:dmrx}
  d\mu(r,x)=r^{n-1}dr\;d\mu_Y(y).
\end{equation}



\subsection{The kernel of the operator $F_{w,t}(\mathcal{L}_{\bf A})$} In this subsection, for the purpose of calculating the kernel of operator $F_{w,t}(\mathcal{L}_{\bf A})$, we begin with recalling the following facts:

\underline{\textbf{ I:}} Let $\mathrm{Re}\mu\neq-\frac{1}{2}, \mathrm{Re} \lambda\neq-\frac{1}{2}$ and $a, b, c>0$, and define
$$A= \arccos\Big(\frac{b^{2}+c^{2}-a^{2}}{2bc}\Big),\quad \mathcal{A}=\cosh^{-1}\Big(\frac{a^{2}-b^{2}-c^{2}}{2bc}\Big).$$
Then, by \cite[p.412]{W}, we have
\begin{align}\label{formula:Ma}
&\int_{0}^{+\infty} t^{1-\mu} J_{\mu}(at) J_{\lambda}(bt) J_{\lambda}(ct) \, dt \nonumber\\
&=
\begin{cases}
0, & \text{if } a < |b - c|, \\
\frac{(bc)^{\mu - 1} \sin^{\mu - \frac{1}{2}} A}{(2\pi)^{\frac12}  a^{\mu}} P_{\lambda - \frac{1}{2}}^{\frac{1}{2} - \mu}(\cos A), & \text{if } |b - c| < a < b + c, \\
\frac{(bc)^{\mu - 1} \sinh^{\mu - \frac{1}{2}} \mathcal{A}}{\left( \frac{\pi^{3}}{2} \right)^{\frac{1}{2}} a^{\mu}} \cos(\pi \lambda) Q_{\lambda - \frac{1}{2}}^{\frac{1}{2} - \mu}(\cosh \mathcal{A}), & \text{if } a > b + c.
\end{cases}
\end{align}

\underline{\textbf{ II:}} Let $ 0<\theta<\pi$ and $\mathrm{Re} \mu_{*}<\frac{1}{2}$. Then, by \cite[p.188]{MOS66}, we can see
\begin{equation}\label{gamma}
\begin{split}
 \Gamma\Big(\frac{1}{2}-\mu_{*}\Big)&P_{\lambda}^{\mu_{*}}(\cos\theta)\\
&=\Big(\frac{\pi}{2}\Big)^{-\frac{1}{2}}(\sin\theta)^{\mu_{*}}\int_{0}^{\theta}(\cos s-\cos\theta)^{-\mu_{*}-\frac{1}{2}}\cos[(\lambda+\frac{1}{2})s]\,ds.
\end{split}
\end{equation}

\underline{\textbf{ III:}} Let $\mathrm{Re}(\lambda+\mu_{*}+1)>0$ and $\mathrm{Re}\mu_{*}<\frac{1}{2}$. By \cite[p.186]{MOS66}, one has
\begin{align}\label{gammaQ}
\Gamma\Big(\frac{1}{2}-\mu_{*}\Big)&Q_{\lambda}^{\mu_{*}}(\cosh\theta)\nonumber
\\&=\Big(\frac{\pi}{2}\Big)^{\frac{1}{2}}e^{i\pi\mu_{*}}(\sinh\theta)^{\mu_{*}}\int_{\theta}^{+\infty}(\cosh s-\cosh\theta)^{-\mu_{*}-\frac{1}{2}}e^{-(\lambda+\frac{1}{2})s}\,ds.
\end{align}

We denote $K_{w,t}\big((r,y),(r^\prime,y^\prime)\big)$ as the kernel of the operator $F_{w,t}(\mathcal{L}_{\bf A})$ with $(r,y),(r^\prime,y^\prime)\in C(Y)$. Consequently, following the similar argument in \cite[p.173]{T86} or \cite[p.276]{CT82} for the kernel formulas of the operator $f(-\Delta_g)$ on the conical manifold, and recalling $F_{\omega,t}(\mathcal{L}_{\bf A})$ in \eqref{equ:fanaoper}, we can obtain
\begin{align*}
 K_{\omega,t}\big((r,y),(r^\prime,y^\prime)\big)&:=F_{w,t}(\mathcal{L}_A)=(rr^\prime)^{-\beta}\int_0^{+\infty}f_{\omega,t}(\lambda^2)J_\nu(\lambda r)J_\nu(\lambda r^\prime)\lambda\;d\lambda\\\nonumber
&=\big(\tfrac{\pi}2\big)^\frac12t^{\omega-\frac{n}{2}}(rr^\prime)^{-\beta}\int_0^{+\infty}\lambda^{1-(\frac{n}{2}-\omega)}J_{\frac{n}{2}-\omega}(t\lambda )J_\nu(\lambda r)J_\nu(\lambda r^\prime)\;d\lambda.
\end{align*}
Then combining \eqref{formula:Ma}, \eqref{gamma}, with \eqref{gammaQ}, we deduce that, when ${\rm Re}\big(\tfrac{n}2-w\big)>0$, there holds
\begin{align}\label{kernel-op}
& K_{\omega,t}\big((r,y),(r^\prime,y^\prime)\big)\\\nonumber
=&\frac12t^{2(w-\frac{n}{2})}(rr^\prime)^{-\omega}\times \begin{cases}
0,\qquad \text{if}\quad t<|r-r^\prime|;\\
\sin^{\frac{n-1}{2}-\omega}A P_{\nu-\frac12}^{\omega-\frac{n-1}{2}}(\cos A),\quad \text{if}\quad |r-r^\prime|<t<r+r^\prime;\\
\frac{2}{\pi}\sinh^{\frac{n-1}{2}-\omega}\mathcal{A}\cos(\pi\nu)Q_{\nu-\frac12}^{\omega-\frac{n-1}{2}}(\cosh \mathcal{A}),\quad \text{if}\quad t>r+r^\prime; \end{cases}\\\nonumber
=&\frac{t^{2(\omega-\frac{n}{2})}(rr^\prime)^{-\omega}}{\sqrt{2\pi}\Gamma(\frac{n}{2}-\omega)}
\times  \begin{cases}
0,\qquad \text{if}\quad t<|r-r^\prime|;\\
\int_0^A (\cos h-\cos A)^{\beta-\omega}\cos h\nu(y,y^\prime) \;dh,\quad \text{if}\quad |r-r^\prime|<t<r+r^\prime;\\
e^{i\pi(\omega-\frac{n-1}{2})}\int_{\mathcal{A}}^{+\infty}(\cosh h-\cosh \mathcal{A})^{\beta-\omega}e^{-h\nu }\cos\nu\pi(y,y^\prime)\;dh,\quad \text{if}\quad t>r+r^\prime; \end{cases}
\end{align}
where
\begin{equation}\label{equ:amathcal2}
A=\arccos\frac{r^2+{r^\prime}^2-t^2}{2rr^\prime}\quad \text{and}\quad \mathcal{A}=\cosh^{-1}\frac{t^2-r^2-{r^\prime}^2}{2rr^\prime}.
\end{equation}

\section{Proofs of main results}\label{proof-2}

In this section, we first give the proof of Theorem \ref{theo-Lp}, and then use Theorem \ref{theo-Lp} to establish Theorem \ref{est-wave}.
For the purpose, we define two integral operators $T_1=T_1(\omega,t)$ and $T_2=T_2(\omega,t)$ respectively to be equal to
\begin{align}\label{T1}
t^{2(w-\frac n2)}(rr^\prime)^{-\omega}\mathbbm{1}_{\{t>r+r^\prime\}}\int_{\mathcal{A}}^{+\infty}(\cosh h-\cosh\mathcal{A})^{\beta-\omega}[e^{-\nu h}\cos\pi\nu]\;dh,
\end{align}
and
\begin{equation}\label{T21}
t^{2(\omega-\frac n2)}(rr^\prime)^{-\omega}\mathbbm{1}_{\{|r-r^\prime|<t<r+r^\prime\}}\int_0^A(\cos h-\cos A)^{\beta-\omega}\cos (h\nu) (y,y^\prime)\;dh.
\end{equation}
Then, by \eqref{kernel-op}, the following relation holds
\begin{equation}\label{div}
F_{\omega,t}(\mathcal{L}_{\bf A})g(r,y)=\frac{1}{\sqrt{2\pi}\Gamma(\frac{n}{2}-\omega)}[e^{i\pi(\omega-\frac{n-1}2)}T_1+T_2]g(r,y),
\end{equation}
where
\begin{align*}
T_{i}(\omega,t)g(r,y)=\int_{C(Y)}T_{i}(\omega,t)((r,y),(r^\prime,y^\prime))g(r^\prime,y^\prime)\;d\mu(r^\prime,y^\prime),\quad \forall g(r^\prime,y^\prime)\in L^p(C(Y)),
\end{align*}
with $i=1,2$.

Hence, to show Theorem \ref{theo-Lp}, it suffices to prove the following propositions:
\begin{proposition}\label{prop:T1}
For each $1\leq p\leq+\infty,$ there exists a constant $C_\epsilon>0$ such that
\begin{equation}\label{prop-ine-T1}
\|T_1(\omega,t)g\|_p\leq C_\epsilon\|g\|_p,\quad \forall g\in L^p(C(Y)).
\end{equation}
\end{proposition}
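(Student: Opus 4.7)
The natural approach is Schur's test. Because the claim is $L^p$-boundedness uniformly for every $p\in[1,\infty]$, it suffices to establish the two symmetric estimates
\begin{equation*}
\sup_{(r,y)}\int_{C(Y)}|T_1((r,y),(r',y'))|\,d\mu(r',y')\le C_\epsilon,\quad \sup_{(r',y')}\int_{C(Y)}|T_1((r,y),(r',y'))|\,d\mu(r,y)\le C_\epsilon.
\end{equation*}
Since $\omega=\epsilon+is(\tfrac{n+1}{2}-\epsilon)$ differs from $\epsilon$ only by an imaginary part, the modulus of each factor in \eqref{T1} reduces to its $\epsilon$-version, and the oscillatory $s$-dependence disappears under absolute values. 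What remains is to control the operator kernel $\mathcal{K}_h(y,y'):=[e^{-h\nu}\cos(\pi\nu)](y,y')$ on the cross section $Y$ and then to integrate the result against the explicit radial weights.

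The heart of the proof is a bound of the form $\sup_{y\in Y}\|\mathcal{K}_h(y,\cdot)\|_{L^1(Y)}\le\Phi(h)$, with $\Phi$ integrable against $(\cosh h-\cosh\mathcal{A})^{\beta-\epsilon}\,dh$ on $[\mathcal{A},\infty)$ uniformly in $\mathcal{A}\ge 0$. Here the hypothesis $\rho>\pi$ is essential: it makes the Hadamard parametrix for the magnetic wave operator $\cos(\pi\nu)$ constructed in the Appendix meaningful, with singular support on the smooth set $\{d_Y(y,y')=\pi\}\subset Y\times Y$. To incorporate the Poisson factor $e^{-h\nu}$ I would write $2\mathcal{K}_h=e^{-(h-i\pi)\nu}+e^{-(h+i\pi)\nu}$ and invoke the subordination identity
\begin{equation*}
e^{-z\nu}=\tfrac{z}{2\sqrt{\pi}}\int_0^\infty s^{-3/2}e^{-z^2/(4s)}e^{-s\nu^2}\,ds,\qquad \mathrm{Re}\,z>0,
\end{equation*}
analytically continued to $z=h\pm i\pi$, reducing matters to standard Gaussian bounds on the magnetic heat kernel of $\nu^2=(i\nabla_y+{\bf A})^2+\beta^2$ on the compact manifold $Y$; the spectral lower bound $\nu\ge\beta>0$ then supplies decay at large $h$.

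Granted the cross-sectional bound, the remaining scalar integral
\begin{equation*}
t^{2(\epsilon-n/2)}r^{-\epsilon}\int_0^{(t-r)_+}(r')^{n-1-\epsilon}\int_\mathcal{A}^\infty(\cosh h-\cosh\mathcal{A})^{\beta-\epsilon}\Phi(h)\,dh\,dr'
\end{equation*}
is treated in two steps. First, the change of variable $u=\cosh h-\cosh\mathcal{A}$ in the inner integral: its convergence at $h=\mathcal{A}$ requires $\beta-\epsilon>-1$, which is ensured by $\tfrac{1}{2}<\epsilon<1$ together with $n\ge 3$, while the tail is controlled by the decay built into $\Phi$. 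Second, the rescaling $(r,r')=(t\tilde r,t\tilde r')$: under it $\cosh\mathcal{A}=(1-\tilde r^2-\tilde r'^2)/(2\tilde r\tilde r')$ is $t$-independent and the prefactor $t^{2(\epsilon-n/2)}$ is absorbed exactly, so the problem reduces to a $t$-independent convergent double integral over the fixed region $\tilde r+\tilde r'<1$. The symmetric bound follows from the same computations because the modulus of the kernel of $T_1$ is symmetric under $(r,y)\leftrightarrow (r',y')$.

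The principal obstacle I anticipate is the cross-sectional $L^1$-estimate for $\mathcal{K}_h$. The Hadamard parametrix for $\cos(\pi\nu)$ decomposes into a principal singular piece supported near $d_Y(y,y')=\pi$ and a smoother remainder, and one must verify that composition with the Poisson-type semigroup $e^{-h\nu}$ produces kernels whose $L^1_{y'}$-norms combine into a function $\Phi(h)$ integrable near $h=0$ and exponentially decaying at infinity. Handling the remainder in the presence of the scaling-critical magnetic potential ${\bf A}$ is precisely the step where purely spectral or heat-kernel arguments fall short, and where the magnetic Hadamard construction of the Appendix becomes indispensable.
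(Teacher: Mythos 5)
Your framework (Schur's test via the two symmetric sup-integrals, symmetry of $|T_1|$, reduction of the modulus to its $\epsilon$-version) coincides with the paper's, but the quantitative core of your argument has a genuine gap. You propose to close the Schur bound with a cross-sectional estimate $\sup_y\|\mathcal{K}_h(y,\cdot)\|_{L^1(Y)}\le\Phi(h)$ such that $\int_{\mathcal{A}}^{\infty}(\cosh h-\cosh\mathcal{A})^{\beta-\epsilon}\Phi(h)\,dh$ is bounded \emph{uniformly in} $\mathcal{A}\ge 0$. That is not sufficient: if the inner $h$-integral is only bounded by a constant, the remaining radial integral is $t^{2(\epsilon-\frac n2)}r^{-\epsilon}\int_0^{t-r}(r')^{n-1-\epsilon}\,dr'\sim t^{2\epsilon-n}r^{-\epsilon}(t-r)^{n-\epsilon}$, which blows up like $r^{-\epsilon}$ as $r\to0$ (take $t=1$), so the supremum over $(r,y)$ required by Schur's test is infinite; your closing remark about a ``$t$-independent convergent double integral over $\tilde r+\tilde r'<1$'' does not repair this, since Schur's test needs the supremum over $\tilde r$ of a single $\tilde r'$-integral, not integrability in both variables. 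What is actually needed — and what the paper proves as Lemma \ref{lem:kerest2} — is the $\mathcal{A}$-dependent decay $\int_{\mathcal{A}}^{\infty}(\cosh h-\cosh\mathcal{A})^{\beta-\epsilon}\Phi(h)\,dh\lesssim_\epsilon(\cosh\mathcal{A}-1)^{-\epsilon}$; since $\cosh\mathcal{A}-1=\frac{t^2-(r+r')^2}{2rr'}$, this supplies exactly the factor $(rr')^{\epsilon}$ that cancels the prefactor $(rr')^{-\epsilon}$ in \eqref{T1} and leaves $[t^2-(r+r')^2]^{-\epsilon}$, after which the $r'$-integral closes. Moreover, the uniform-in-$\mathcal{A}$ bound you posit is itself doubtful: with $\Phi(h)\sim h^{\frac{1-n}2}$ near $h=0$, the integral behaves like $\mathcal{A}^{\frac{n-1}2-2\epsilon}$ as $\mathcal{A}\to0$, which is unbounded already for $n=3$ and $\epsilon>\frac12$.

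Your route to the cross-sectional bound is also more fragile than necessary. The subordination identity cannot be continued to $z=h\pm i\pi$ when $h<\pi$, because then $\mathrm{Re}(z^2)=h^2-\pi^2<0$ and the factor $e^{-z^2/(4s)}$ is not integrable near $s=0$; yet small $h$ is precisely the regime that matters. The paper's Lemma \ref{lem:kerest1} avoids this entirely: expand in the eigenbasis of $(i\nabla_y+{\bf A})^2$ on the compact cross section, bound $|\cos(\pi\sqrt{\lambda_j+\beta^2})|\le1$, apply Cauchy--Schwarz, and reduce to the on-diagonal bound for $e^{-h\nu}(y,y)$, obtained by real-$z$ subordination to the magnetic heat kernel on $Y$; this gives $\Phi(h)\lesssim e^{-\beta h}(1+h^{\frac{1-n}2})$. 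In particular, neither the Hadamard parametrix nor the hypothesis $\rho>\pi$ is used for $T_1$ — those enter only for $T_2$ through Proposition \ref{lemT} — so the ``principal obstacle'' you anticipate is misplaced, while the genuinely missing step is the $(\cosh\mathcal{A}-1)^{-\epsilon}$ decay described above.
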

\begin{proposition}\label{prop:T2}
For each $1\leq p\leq+\infty,$ there exists a constant $C_{\epsilon}>0$ such that
\begin{equation}\label{prop-ine-T2}
\|T_{2}(\omega,t)g\|_p\leq C_{\epsilon}\|g\|_p,\quad \forall g\in L^p(C(Y)).
\end{equation}
\end{proposition}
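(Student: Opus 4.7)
The plan is to substitute the Hadamard parametrix for the half-wave propagator $\cos(h\nu)$ on the compact manifold $Y$, whose construction is the content of the Appendix, into the kernel \eqref{T21} of $T_2$, and then reduce the resulting oscillatory integral operator on $\Sigma$ to kernels whose $L^p$-boundedness can be obtained either by Schur-type estimates or by comparison with the Euclidean model treated in \cite{L,LN}. The key geometric observation is that on the support of $\mathbbm{1}_{\{|r-r'|<t<r+r'\}}$ the angle $A=\arccos\frac{r^2+r'^2-t^2}{2rr'}$ lies in $(0,\pi)$, so under the standing hypothesis $\rho>\pi$ the variable $h\in[0,A]$ never leaves the injectivity radius of $Y$ and the Hadamard parametrix for $\cos(h\nu)$ is valid throughout the $h$-integration. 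For any fixed large $N$ the parametrix should provide a decomposition
\[
\cos(h\nu)(y,y')=\sum_{j=0}^{N}w_j(y,y')\,\Phi_j\bigl(h,d_Y(y,y')\bigr)+R_N(h;y,y'),
\]
with smooth coefficients $w_j$ supported near the diagonal, explicit Riesz-type distributions $\Phi_j$ modeled on the Euclidean half-wave kernel in dimension $n-1$, and a remainder $R_N$ of class $C^k$ for $k$ as large as one wishes at the cost of taking $N$ large.

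Splitting $T_2=T_2^{\mathrm{main}}+T_2^{\mathrm{rem}}$ accordingly, the remainder piece is the easy half: since $|(\cos h-\cos A)^{\beta-\omega}|=(\cos h-\cos A)^{\beta-\epsilon}$ is locally integrable in $h\in[0,A]$, a Schur estimate on the cone with respect to the measure $d\mu(r,y)=r^{n-1}\,dr\,d\mu_Y(y)$ from \eqref{equ:dmrx} gives the desired $L^p$ bound uniformly in $s$. For the main term one substitutes the explicit $\Phi_j$, which up to $h$-derivatives are Riesz-type distributions of the form $(h^2-d_Y(y,y')^2)_+^{\alpha_j}$, and performs the integration in $h$ via the change of variable $u=\cos h-\cos A$. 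A Taylor expansion of $\cos h$ at $h=d_Y(y,y')$ shows that the surviving kernel on $\Sigma$ is, to leading order, an oscillatory fractional integral with a model of the form $t^{2(\omega-n/2)}(rr')^{-\omega}\bigl(t^2-d^2\bigr)_+^{\omega-(n-1)/2}$, plus strictly lower-order pieces that can be absorbed into the remainder analysis. The uniform $L^p$-boundedness of this model kernel is exactly what is established in \cite{L,LN} in the potential-free case, and it extends to the magnetic setting because the $w_j$'s are bounded on $Y\times Y$.

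The main obstacle is to carry out the whole analysis uniformly in the imaginary parameter $s$, since the conclusion \eqref{prop-ine-T2} must be $s$-independent so that Stein interpolation can eventually be applied inside Theorem \ref{theo-Lp}. The oscillating factor $(\cos h-\cos A)^{-is(\frac{n+1}{2}-\epsilon)}$ arising in $T_2^{\mathrm{main}}$ contributes polynomial-in-$|s|$ growth after an integration by parts near $h=A$, and this growth must be matched against the Stirling-type decay of the prefactor $1/\Gamma(\tfrac{n}{2}-\omega)$ already isolated in \eqref{div}. A secondary but nontrivial technical point is that because of the magnetic potential the transport equations defining the $w_j$'s contain extra first-order terms from $\A$, so one has to verify that they remain solvable on $Y$ with bounds depending only on $Y$ and $\A$; this is precisely what the Appendix is designed to supply, and it must be invoked here with the appropriate uniform estimates on the $w_j$ and $R_N$.
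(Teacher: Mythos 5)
Your overall architecture (substitute the Hadamard parametrix of Lemma \ref{hp} into the kernel \eqref{T21}, derive kernel bounds, conclude by a Schur-type/Riesz convexity argument) is the same as the paper's, but two steps in your reduction have genuine gaps. First, the remainder piece: local integrability of $(\cos h-\cos A)^{\beta-\epsilon}$ in $h$ is \emph{not} enough for the Schur estimate on the cone. Without any constraint relating $d_Y(y,y')$ to $A$, the row integral with respect to $d\mu(r',y')=r'^{n-1}dr'\,d\mu_Y(y')$ behaves like $t^{2\epsilon-n}\int (rr')^{-\epsilon}\mathbbm{1}_{\{|r-r'|<t<r+r'\}}r'^{n-1}\,dr'\sim (r/t)^{n-1-2\epsilon}$, which is unbounded as $r/t\to\infty$ because $\epsilon<1\leq\frac{n-1}{2}$. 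What saves this term is the finite-propagation support property of the remainder (Lemma \ref{hp}, item 2: $C_K(s,y,y')=0$ for $d_Y(y,y')>s$ once $K$ is large), which confines the $y'$-integration to $d_Y(y,y')<A$ and produces the volume gain $A^{n-1}\sim\bigl(\tfrac{t^2-(r-r')^2}{2rr'}\bigr)^{\frac{n-1}{2}}$ used in the paper's estimate of $G_e$ and in Lemma \ref{lemK} (case $i=1$). You must invoke this support property explicitly; mere smoothness of $R_N$ does not suffice.

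Second, the main term is not simply the Euclidean model $(t^2-d^2)_+^{\omega-\frac{n-1}{2}}$ plus pieces absorbable into the remainder. The most singular parametrix term ($k=0$, distribution $(h^2-d_Y^2)_+^{-n/2}$) must be paired with $(\cos h-\cos A)^{\beta-\omega}$ via integrations by parts: $(\tfrac1{2h}\partial_h)^{\frac n2-1}$ when $n$ is even and a Riemann--Liouville integration when $n$ is odd. In the odd case this produces an extra kernel contribution carrying the singular factor $\tfrac1{d_Y(y,y')}(A^2-d_Y^2(y,y'))^{\frac12-\epsilon}$ (and its analogue near $A=\pi$), which requires its own Schur computation on $Y$, convergent only because $n-1\geq2$; it cannot be "absorbed." Moreover the comparison $\cos h-\cos A\sim A^2-h^2$, on which your reduction to the model of \cite{L,LN} rests, fails as $A\to\pi$, i.e.\ as $t\to(r+r')^-$; there one must use $\cos h-\cos A\gtrsim (A-h)(\pi-A)$ and treat the regime $\tfrac\pi2\le A<\pi$ separately, which is precisely the origin of the $(\pi-A)^{-\frac12-\epsilon}$ factors in Proposition \ref{lemT} and of the kernel $K_2$ in Lemma \ref{lemK}. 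Citing \cite{L,LN} for "the model kernel" does not discharge these steps; carrying them out is the technical core of the paper's proof of Proposition \ref{prop:T2}. (Your worry about uniformity in $s$ is legitimate but benign here: all estimates are on moduli, $|(\cos h-\cos A)^{\beta-\omega}|=(\cos h-\cos A)^{\beta-\epsilon}$, and any polynomial growth in $|s|$ coming from differentiated exponents is dominated by the $1/\Gamma(\tfrac n2-\omega)$ factor already isolated in \eqref{div}.)
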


\subsection{Proof of Proposition \ref{prop:T1}} In this subsection, we focus on establishing Proposition \ref{prop:T1}. To achieve this goal, we highly rely on two key results, Lemma \ref{lem:kerest1} and Lemma \ref{lem:kerest2}, which will be showed in subsection \ref{proof12}.
\begin{lemma}\label{lem:kerest1} Let $H(y,y^\prime)$ be the kernel of the operator $e^{-\nu h}\cos(\pi\nu)$ and denote
\begin{equation}\label{def-H1}
\|e^{-h\nu}\cos(\pi\nu)\|_*:=\sup_{y\in Y}\int_Y|H(y,y^\prime)|\;d\mu_Y(y^\prime)
\end{equation}
and
\begin{equation}\label{def-H2}
\|e^{-h\nu}\cos(\pi\nu)\|^*:=\sup_{y^\prime\in Y}\int_Y|H(y,y^\prime)|\;d\mu_Y(y).
\end{equation}
 Then, there exists a constant $C>0$ such that
\begin{align}\label{ine:kere1}
\|e^{-h\nu}\cos(\pi\nu)\|_*+\|e^{-h\nu}\cos(\pi\nu)\|^*\leq Ce^{-h\beta}(1+h^{\frac {1-n}2}).
\end{align}
\end{lemma}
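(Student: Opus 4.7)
The plan is to decompose
\[
e^{-h\nu}\cos(\pi\nu)=\tfrac{1}{2}\bigl(e^{-(h-i\pi)\nu}+e^{-(h+i\pi)\nu}\bigr),
\]
so that it suffices to estimate the $L^{1}$-kernel norms of the two complex-time Poisson operators $e^{-(h\pm i\pi)\nu}$. The prefactor $e^{-h\beta}$ will arise from the spectral bound $\nu\geq\beta$, while the correction $h^{(1-n)/2}$ will reflect the wave-like singular nature of $\cos(\pi\nu)$ on the $(n-1)$-dimensional compact manifold $Y$. I would treat the regimes $h\geq 2\pi$ and $0<h<2\pi$ separately.

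For the large-$h$ regime ($h\geq 2\pi$), I would apply the Bochner subordination formula
\[
e^{-z\nu}=\frac{z}{2\sqrt{\pi}}\int_{0}^{\infty}s^{-3/2}\,e^{-z^{2}/(4s)-s\beta^{2}}\,e^{-sL_{Y}}\,ds,\qquad L_{Y}:=(i\nabla_{y}+\mathbf{A})^{2},
\]
which is absolutely convergent for $z=h\pm i\pi$ since $\mathrm{Re}(z^{2})=h^{2}-\pi^{2}>0$. The diamagnetic (Kato) inequality dominates the magnetic heat kernel pointwise by the free heat kernel on $Y$, so $\int_{Y}|e^{-sL_{Y}}(y,y')|\,d\mu_{Y}(y')\leq 1$. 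Combined with the classical identity $\int_{0}^{\infty}s^{-3/2}e^{-a/s-bs}\,ds=\sqrt{\pi/a}\,e^{-2\sqrt{ab}}$ applied with $a=(h^{2}-\pi^{2})/4$ and $b=\beta^{2}$, this yields
\[
\bigl\|e^{-(h\pm i\pi)\nu}\bigr\|_{L^{1}\to L^{1}}\;\lesssim\;\frac{\sqrt{h^{2}+\pi^{2}}}{\sqrt{h^{2}-\pi^{2}}}\,e^{-\beta\sqrt{h^{2}-\pi^{2}}}\;\lesssim\;e^{-h\beta},
\]
where the last inequality uses $\sqrt{h^{2}-\pi^{2}}\geq h-\pi^{2}/h$ for $h\geq 2\pi$. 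This is absorbed in the desired bound, and by self-adjointness of $\nu$ the same estimate holds for the dual norm.

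For the small-$h$ regime ($0<h<2\pi$), the subordination integral ceases to converge absolutely, so I would instead use the Fourier representation of the spectral multiplier
\[
e^{-h\nu}\cos(\pi\nu)=\frac{1}{\pi}\int_{0}^{\infty}\hat{f}_{h}(\xi)\,\cos(\xi\nu)\,d\xi,\qquad \hat{f}_{h}(\xi)=\frac{h}{h^{2}+(\xi-\pi)^{2}}+\frac{h}{h^{2}+(\xi+\pi)^{2}},
\]
where $\hat{f}_{h}$ is a sum of Lorentzian bumps of width $h$ centered at $\xi=\pm\pi$, and invoke the Hadamard parametrix for $\cos(\xi\nu)$ on $Y$ constructed in the Appendix. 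The hypothesis $\rho_{Y}>\pi$ guarantees that this parametrix is valid on a neighborhood of $\xi=\pi$: it represents $\cos(\xi\nu)(y,y')$ as a distribution concentrated on $\{d_{Y}(y,y')=\xi\}$ with explicit singular leading amplitude, plus smoother remainders. Inserting this representation and integrating the Lorentzian weight $\hat{f}_{h}$ localizes the kernel to a tube of width $\sim h$ around the sphere $\{d_{Y}=\pi\}$, whose $\mu_{Y}$-volume is $O(h)$; combining with the pointwise size of the Hadamard leading term and carrying out elementary integration produces the $h^{(1-n)/2}$ bound. The exponential factor $e^{-h\beta}$ persists because $\nu\geq\beta$ makes $e^{-h\nu}$ contract by this factor, which one can extract via spectral calculus (e.g.\ writing $e^{-h\nu}=e^{-h\beta}\,e^{-h(\nu-\beta)}$ and estimating the remainder).

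The main obstacle is the small-$h$ analysis: since $\cos(\pi\nu)$ is not $L^{1}$-bounded on $Y$ for $n-1\geq 2$, the rate of $h^{(1-n)/2}$ blow-up has to be extracted carefully from the wave-kernel singularity at $d_{Y}=\pi$. This is precisely where the Hadamard parametrix from the Appendix is indispensable, and the injectivity radius condition $\rho_{Y}>\pi$ is exactly what makes the parametrix applicable at time $\pi$. The companion norm $\|\cdot\|^{*}$ is handled by the same argument with $y$ and $y'$ exchanged, relying on self-adjointness of $\nu$ so that $|H(y,y')|=|H(y',y)|$.
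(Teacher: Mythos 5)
Your large-$h$ treatment ($h\ge 2\pi$) is sound: the complex-time subordination with $\mathrm{Re}\big((h\pm i\pi)^2\big)=h^2-\pi^2>0$, the diamagnetic domination of the magnetic heat kernel, and the explicit integral $\int_0^\infty s^{-3/2}e^{-a/s-bs}\,ds=\sqrt{\pi/a}\,e^{-2\sqrt{ab}}$ do give the Schur bound $\lesssim e^{-h\beta}$ there, and Hermitian symmetry of the kernel takes care of the dual norm. The genuine gap is in the regime $0<h<2\pi$, which is precisely where the factor $h^{\frac{1-n}2}$ must be produced. Your Lorentzian representation of the multiplier is correct, but the plan ``insert the Hadamard parametrix for $\cos(\xi\nu)$ and integrate'' is only asserted at the two places where the work actually lies. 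First, the Lorentzian weights are not localized: they have heavy tails over all $\xi\in(0,\infty)$, whereas the parametrix of the Appendix (Lemma \ref{hp}, Lemma \ref{Wave}) is valid only for times below the injectivity radius, and $\rho>\pi$ may hold with arbitrarily small margin; the contribution of $\xi$ beyond $\rho$ cannot be dismissed by ``localization to a tube of width $h$'' and needs a separate argument (for instance integration by parts in $\xi$ to gain decay in the spectral parameter, then summation against Weyl counting and eigenfunction bounds). Second, the leading parametrix terms $(\xi^2-d_Y^2(y,y'))_+^{k-\frac n2}/\Gamma(k-\frac n2+1)$ are distributions for small $k$, not functions, so ``the pointwise size of the Hadamard leading term'' is not available as such; one must first integrate in $\xi$ against the smooth Lorentzian and only then estimate the resulting kernel in $y'$, and this computation --- the actual source of $h^{\frac{1-n}2}$ --- is not carried out. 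As written, the small-$h$ case is a program rather than a proof.

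For comparison, the paper's argument avoids the wave kernel entirely: since only the Schur norms are needed and $Y$ is compact, Cauchy--Schwarz reduces the $L^1_{y'}$ bound to the $L^2_{y'}$ bound, which by orthonormality equals $\sum_j \cos^2(\pi\sqrt{\lambda_j+\beta^2})e^{-2h\sqrt{\lambda_j+\beta^2}}|\varphi_j(y)|^2\le e^{-h\beta}\,e^{-h\nu}(y,y)$; the cosine factor is simply bounded by $1$, and the on-diagonal Poisson kernel is controlled by real-time subordination to the heat kernel, giving $e^{-2h\beta}(1+h^{1-n})$ and hence the stated estimate after taking square roots. In particular, neither the injectivity-radius hypothesis nor the Hadamard parametrix is needed for this lemma (they enter only in Proposition \ref{lemT}). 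If you wish to keep your route, which is in the Cheeger--Taylor spirit and would even yield pointwise kernel information, you must supply the missing tail and parametrix computations indicated above.
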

\begin{lemma}\label{lem:kerest2}Let $\|e^{-h\nu}\cos(\pi\nu)\|_*$ and $\|e^{-h\nu}\cos(\pi\nu)\|^*$ be in \eqref{def-H1} and \eqref{def-H2} respectively, then  there exists a constant $C_\epsilon>0$ satisfying
\begin{align}\label{est-ps}
&\int_{\mathcal{A}}^{+\infty}(\|e^{-h\nu}\cos(\pi\nu)\|_*+\|e^{-h\nu}\cos\pi\nu\|^*)|(\cosh h-\cosh\mathcal{A})^{\beta-w}|\;dh\nonumber\\
\leq& C_\epsilon(\cosh\mathcal{A}-1)^{-\epsilon},\quad\forall\mathcal{A}>0.
\end{align}
\end{lemma}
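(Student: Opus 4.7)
The idea is to invoke Lemma~\ref{lem:kerest1} to replace the operator norm by the scalar bound $Ce^{-h\beta}(1+h^{(1-n)/2})$, and then to reduce the lemma to the real-variable estimate
\begin{equation*}
J(\mathcal{A}) := \int_\mathcal{A}^{+\infty} e^{-h\beta}\bigl(1+h^{(1-n)/2}\bigr)(\cosh h-\cosh\mathcal{A})^{\beta-\epsilon}\,dh \lesssim_\epsilon (\cosh\mathcal{A}-1)^{-\epsilon},
\end{equation*}
where I have used that $|(\cosh h-\cosh\mathcal{A})^{\beta-\omega}|=(\cosh h-\cosh\mathcal{A})^{\mathrm{Re}(\beta-\omega)}=(\cosh h-\cosh\mathcal{A})^{\beta-\epsilon}$, since the base is real and positive. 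The principal analytic tool will be the product formula $\cosh h-\cosh\mathcal{A}=2\sinh(\tfrac{h+\mathcal{A}}{2})\sinh(\tfrac{h-\mathcal{A}}{2})$, which cleanly separates the two length scales $\mathcal{A}$ and $h-\mathcal{A}$.

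The analysis will then split according to the size of $\mathcal{A}$. When $\mathcal{A}\geq 1$, the target size is $(\cosh\mathcal{A}-1)^{-\epsilon}\asymp e^{-\epsilon\mathcal{A}}$. After the substitution $r=h-\mathcal{A}$, on $r\in[0,1]$ one has $\sinh(\tfrac{h+\mathcal{A}}{2})\asymp e^\mathcal{A}$ and $\sinh(\tfrac{r}{2})\asymp r$, so the integrand is $\lesssim e^{-\epsilon\mathcal{A}}e^{-\beta r}r^{\beta-\epsilon}$; the remaining $r$-integral is finite because $\beta-\epsilon+1>0$ (as $\epsilon<1\leq\beta+1$ for $n\geq 3$). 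For $r\geq 1$ both hyperbolic sines are exponential, yielding an integrand $\lesssim e^{-\epsilon(\mathcal{A}+r)}$. The factor $1+h^{(1-n)/2}$ is bounded in this regime, so $J(\mathcal{A})\lesssim_\epsilon e^{-\epsilon\mathcal{A}}$.

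When $\mathcal{A}\leq 1$, the target is $(\cosh\mathcal{A}-1)^{-\epsilon}\asymp\mathcal{A}^{-2\epsilon}$. For $h\geq 2$, the sinh factors are again exponential and $e^{-h\beta}(\cosh h-\cosh\mathcal{A})^{\beta-\epsilon}\asymp e^{-\epsilon h}$, so the tail contributes $O(1)\lesssim\mathcal{A}^{-2\epsilon}$. For $h\in[\mathcal{A},2]$ both sinh factors are linear, hence $\cosh h-\cosh\mathcal{A}\asymp h^2-\mathcal{A}^2$. I would then split further at $h=2\mathcal{A}$: on $[\mathcal{A},2\mathcal{A}]$ one has $h\sim\mathcal{A}$ and $h^2-\mathcal{A}^2\asymp\mathcal{A}(h-\mathcal{A})$, producing the contribution $\asymp\mathcal{A}^{(n-1)/2-2\epsilon}\leq\mathcal{A}^{-2\epsilon}$; on $[2\mathcal{A},2]$ one has $h^2-\mathcal{A}^2\asymp h^2$, reducing the matter to a power integral of the form $\int_{2\mathcal{A}}^{2} h^{(n-3)/2-2\epsilon}\,dh$ (with a logarithmic variant in the borderline case), each branch of which is again dominated by $\mathcal{A}^{-2\epsilon}$.

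The main obstacle is the small-$\mathcal{A}$ regime: one must simultaneously accommodate the singularity of $h^{(1-n)/2}$ at $h=0$ and the singularity of $(\cosh h-\cosh\mathcal{A})^{\beta-\epsilon}$ at $h=\mathcal{A}$, while the sign and magnitude of the resulting exponent $(n-3)/2-2\epsilon$ vary with $n$ and $\epsilon\in(\tfrac12,1)$. The splitting at $h=2\mathcal{A}$ is the key device that keeps each sub-integral explicit and yields a bound uniform in $\mathcal{A}>0$, and the condition $\epsilon<1$ is precisely what ensures integrability at $h=\mathcal{A}$ throughout.
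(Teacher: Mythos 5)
Your proposal is correct, and for the small-$\mathcal{A}$ regime it is essentially the paper's argument: the paper also splits $(\mathcal{A},\infty)$ into $(\mathcal{A},2\mathcal{A})\cup(2\mathcal{A},1+\mathcal{A})\cup(1+\mathcal{A},\infty)$ and uses $\cosh h-\cosh\mathcal{A}=2\sinh\frac{h+\mathcal{A}}2\sinh\frac{h-\mathcal{A}}2$ on each piece, arriving at the same bound $\mathcal{A}^{-2\epsilon}\asymp(\cosh\mathcal{A}-1)^{-\epsilon}$ (the paper even uses the cruder weight $h^{1-n}$ there, so your sharper exponent $(n-3)/2-2\epsilon$, with the logarithmic borderline noted, is if anything a cleaner bookkeeping). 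The genuine difference is the case $\mathcal{A}\geq 1$: the paper recognizes $\int_{\mathcal{A}}^{\infty}e^{-\beta h}(\cosh h-\cosh\mathcal{A})^{\beta-\epsilon}\,dh$ as the integral representation \eqref{gammaQ} of the Legendre function $Q_{\beta-\frac12}^{\epsilon-\frac{n-1}2}(\cosh\mathcal{A})$ and then invokes the large-argument asymptotics $Q_a^b(z)\sim c\,z^{-a-1}$, which gives $(\sinh\mathcal{A})^{\frac{n-1}2-\epsilon}(\cosh\mathcal{A})^{-\frac{n-1}2}\lesssim(\cosh\mathcal{A}-1)^{-\epsilon}$; you instead substitute $r=h-\mathcal{A}$ and estimate directly, getting $e^{-\epsilon\mathcal{A}}e^{-\beta r}r^{\beta-\epsilon}$ on $r\in[0,1]$ (integrable since $\beta-\epsilon>-1$) and $e^{-\epsilon h}$ for $r\geq1$. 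Your route is more elementary and self-contained, avoiding the special-function identity and its asymptotic expansion, at the price of not exhibiting the exact closed form; the paper's route is shorter given that \eqref{gammaQ} is already used elsewhere in the kernel computation. Both correctly reduce $|(\cosh h-\cosh\mathcal{A})^{\beta-\omega}|$ to the real power $\beta-\epsilon$ and both yield the stated uniform bound for all $\mathcal{A}>0$.
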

We postpone the proof of Lemma \ref{lem:kerest1} and Lemma \ref{lem:kerest2} to subsection \ref{proof12}.
With Lemma \ref{lem:kerest2} in hand, we now turn our attentions to showing Proposition \ref{prop:T1} by recalling the representation of $T_1(w,t)((r,y),(r^\prime,y^\prime))$ in \eqref{T1}.
To proceed, we apply the estimate \eqref{est-ps} and the fact $\mathcal{A}=\cosh^{-1}\frac{t^2-r^2-{r^\prime}^2}{2rr^\prime}$ to derive that

\begin{align}
&\int_{C(Y)}|T_1((r,y),(r^\prime,y^\prime))|\;d\mu(r^\prime,y^\prime)\nonumber\\
=&\int_0^{+\infty}\int_Y\Big|t^{2(w-\frac n2)}(rr^\prime)^{-w}\mathbbm{1}_{\{t>r+r^\prime\}}e^{i\pi(w-\frac{n-1}2)}\nonumber\\
&\times\int_{\mathcal{A}}^{+\infty}(\cosh h-\cosh\mathcal{A})^{\beta-w}[e^{-\nu h}\cos\pi\nu](y,y^\prime)\;dh\Big|\;d\mu_Y(y^\prime){r^\prime}^{n-1}\;dr^\prime\nonumber\\
\lesssim&\int_0^{+\infty}t^{2(\epsilon-\frac n2)}(rr^\prime)^{-\epsilon}\Big[\int_{\mathcal{A}}^{+\infty}\|e^{-h\nu}\cos\pi\nu\|_*|(\cosh h-\cosh\mathcal{A})^{\beta-w}|\;dh\Big]\mathbbm{1}_{\{t>r+r^\prime\}}{r^\prime}^{n-1}\;dr^\prime
\nonumber\\
\lesssim_\epsilon&\int_0^{+\infty}t^{2(\epsilon-\frac n2)}(rr^\prime)^{-\epsilon}(\cosh\mathcal{A}-1)^{-\epsilon}\mathbbm{1}_{\{t>r+r^\prime\}}{r^\prime}^{n-1}\;dr^\prime\nonumber\\
=_\epsilon&\int_0^{+\infty}t^{2(\epsilon-\frac n2)}[t^2-(r+r^\prime)^2]^{-\epsilon}\mathbbm{1}_{\{t>r+r^\prime\}}{r^\prime}^{n-1}\;dr^\prime.\nonumber\\
\lesssim_\epsilon& t^{2(\epsilon-\frac n2)}t^{n-1}t^{-\epsilon}\int_0^{t-r}[t-(r+r^\prime)]^{-\epsilon}\;dr^\prime
\lesssim_\epsilon1.\nonumber
\end{align}
Hence, it concludes that there exists a constant $C_\epsilon$ satisfying
\begin{align}\label{est-rx}
\sup_{(r,y)\in C(Y)}\int_{C(Y)}|T_1((r,y),(r^\prime,y^\prime))|\;d\mu(r^\prime,y^\prime)< C_\epsilon
\end{align}
for $t>0$, $\omega=\epsilon +is(\frac{n+1}2-\epsilon)$ with $s\in\R$, $\frac12<\epsilon<1$.

Since
$$|T_1((r,y),(r^\prime,y^\prime))|=|T_1((r^\prime,y^\prime),(r,y))|,$$
there also holds
\begin{align} \label{est-rrx}
\sup_{(r^\prime,y^\prime)\in C(Y)}\int_{C(Y)}|T_1((r,y),(r^\prime,y^\prime))|\;d\mu(r,y)< C_\epsilon.
\end{align}
Using Riesz convexity theorem, and so Proposition \ref{prop:T1} follows.

\subsection{Proof of Proposition \ref{prop:T2}} In this subsection, we need the following kernel estimate for  $T_{2}((r,y),(r^\prime,y^\prime))$, which will allow us to establish Proposition \ref{prop:T2}.
\begin{proposition}\label{lemT}
Let $T_{2}((r,y),(r^\prime,y^\prime))$ be in \eqref{T21}, and $\frac12<\epsilon<1$, then there exists a constant $C_{\epsilon}>0$ such that
\begin{align}\label{est-KT21}
|T_{2}((r,y),&(r^\prime,y^\prime))|\leq C_{\epsilon}t^{2(\epsilon-\frac n2)}(rr^\prime)^{-\epsilon}\mathbbm{1}_{\{y,y^\prime;d_Y(y,y^\prime)<A\}}\mathbbm{1}_{\{r,r^\prime>0;|r-r^\prime|<t<r+r^\prime\}}\\
&\times
\begin{cases}
(A^2-d_{Y}^2(y,y^\prime))^{-\epsilon}\Big[1+\frac{(A^2-d_{Y}^2(y,y^\prime))^{\frac12}}{d_{Y}(y,y^\prime)}\Big],&0<A<\frac\pi2,\nonumber\\
(A-d_{Y}(y,y^\prime))^{-\epsilon}(\pi-A)^{-\frac12-\epsilon}\Big[1+\frac{(\pi-A)(A-d_{Y}(y,y^\prime))^{\frac12}}{d_{Y}(y,y^\prime)}\Big],&\frac\pi2\leq A<\pi.\nonumber
\end{cases}
\end{align}
\end{proposition}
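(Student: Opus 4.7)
The plan is to exploit the Hadamard parametrix for $\cos(h\nu)$ on the closed $(n-1)$-dimensional manifold $Y$, whose construction (adapted to the scaling-critical magnetic operator $(i\nabla_y+\A)^2+\beta^2$) is furnished in the Appendix. This yields a representation
$$\cos(h\nu)(y,y')=\sum_{k=0}^{N}w_k(y,y')\,\Phi_k\!\big(h,d_Y(y,y')\big)+R_N(h,y,y'),$$
where the $w_k$ are smooth Hadamard coefficients (incorporating the magnetic potential $\A$), the $\Phi_k$ are universal distributional cosine-wave fundamental solutions supported in $\{h\ge d_Y\}$ with leading singularity concentrated on the wavefront $h=d_Y$, and $R_N$ is $C^N$-smooth once $N$ is chosen large enough. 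The support property of the $\Phi_k$ together with the integration range $h\in(0,A)$ immediately produces the cutoff $\mathbbm{1}_{\{d_Y(y,y')<A\}}$ appearing on the right-hand side.

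Write $d:=d_Y(y,y')$ and substitute the parametrix into \eqref{T21}. The inner $h$-integral becomes
$$\sum_{k=0}^{N}w_k(y,y')\,I_k(A,d)+\mathcal R_N,\qquad I_k(A,d)=\int_d^A(\cos h-\cos A)^{\beta-\omega}\Phi_k(h,d)\,dh.$$
After integrating by parts to move the derivatives hidden in $\Phi_k$ onto the smooth weight $(\cos h-\cos A)^{\beta-\omega}$, each $I_k$ reduces to a finite sum of fractional integrals of the form
$$\int_d^A(\cos h-\cos A)^{\beta-\omega-j}(\sin h)^{j}\,\psi_{k,j}(h,d)\,dh,$$
with $j$ in a finite range and $\psi_{k,j}$ smooth in $(h,d)$. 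The change of variable $u=\cos h-\cos A$ (with $du=-\sin h\,dh$) turns these into Beta-type integrals, yielding estimates of the shape $|I_k(A,d)|\lesssim_\epsilon(\cos d-\cos A)^{-\epsilon}\cdot M_k(A,d)$, with $M_k$ polynomial in the relevant trigonometric quantities.

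To match the geometric form stated in the proposition, I translate the bound via the identity
$$\cos d-\cos A=2\sin\!\Big(\frac{A+d}{2}\Big)\sin\!\Big(\frac{A-d}{2}\Big).$$
For $0<A<\pi/2$, both sines are comparable to their arguments, so $\cos d-\cos A\asymp A^2-d^2$, giving the case-1 bound $(A^2-d^2)^{-\epsilon}$. For $\pi/2\le A<\pi$, the factor $\sin((A+d)/2)$ degenerates like $\pi-A$ near $h=A$, which produces the $(\pi-A)^{-1/2-\epsilon}$ factor of case 2, while $\sin((A-d)/2)\asymp A-d$ is responsible for the $(A-d)^{-\epsilon}$ factor. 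The bracketed corrections $1+(A^2-d^2)^{1/2}/d$ (resp.\ $1+(\pi-A)(A-d)^{1/2}/d$) arise from subprincipal pieces of $\Phi_k$ whose kernels pick up an extra $(\sin d)^{-1}$ singularity at the axis $d\to 0$; the compensating $(\sin h)^j\,dh$ factor produced during integration by parts keeps those contributions integrable and reproduces exactly the claimed powers.

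The main obstacle will be the simultaneous bookkeeping of three distinct singularities: the endpoint singularity of $(\cos h-\cos A)^{\beta-\omega}$ at $h=A$, the wavefront singularity of $\Phi_k$ at $h=d$, and the axial behavior of the coefficients $w_k$ and $\Phi_k$ as $d\to 0$. A secondary technical point—crucial to keep the bound independent of the imaginary parameter $s$ and in the constant $C_\epsilon$—is uniform control of the Hadamard coefficients $w_k$ and finitely many of their derivatives in the presence of the scaling-critical magnetic potential $\A$; this is precisely what the adapted parametrix construction in the Appendix supplies.
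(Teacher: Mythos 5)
Your overall strategy coincides with the paper's: substitute the Hadamard parametrix of Lemma \ref{hp} for $\cos(h\nu)$ into \eqref{T21}, use the finite propagation support to produce the cutoff $\mathbbm{1}_{\{d_Y(y,y^\prime)<A\}}$, move the derivatives hidden in the distributions $(h^2-d_Y^2)_+^{k-\frac n2}/\Gamma(k-\frac n2+1)$ onto the weight $(\cos h-\cos A)^{\beta-\omega}$ via \eqref{eo}, and convert to the stated two-case bound through $\cos d-\cos A=2\sin\frac{A+d}2\sin\frac{A-d}2$ with the split at $A=\frac\pi2$. For $\frac n2\in\mathbb{N}$ and for all terms $k\geq1$ this sketch is essentially the paper's argument.

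However, there is a genuine gap at exactly the point where the proposition's bracket corrections and the $(\pi-A)^{-\frac12-\epsilon}$ factor are produced. When $\frac n2\notin\mathbb{N}$ the leading term ($k=0$) cannot be reduced, as you claim, to integrals $\int_d^A(\cos h-\cos A)^{\beta-\omega-j}(\sin h)^j\psi_{k,j}\,dh$ with $\psi_{k,j}$ \emph{smooth}: after the $\frac{n-3}2$ admissible integrations by parts one is still pairing against the non-integrable kernel $h(h^2-d_Y^2)_+^{-\frac32}/\Gamma(-\frac12)$, so a half-order wavefront singularity at $h=d_Y$ survives and must be treated by a further fractional (Riemann--Liouville) integration by parts, after splitting the integral at $h=\frac{A+d_Y}2$; this produces a boundary term and an integral of $f^\prime(h)(h-d_Y)_+^{-\frac12}$ (the terms \eqref{dec-1}--\eqref{dec-2} in the paper), and it is the estimate $\int_0^{(A-d_Y)/2}(s+d_Y)^{-\frac32}s^{-\frac12}\,ds\lesssim d_Y^{-1}$ applied to this \emph{principal} piece that yields the $1/d_Y(y,y^\prime)$ corrections, while the factor $\sin\frac{A+h}2\sim\pi-A$ near $h=A$ yields $(\pi-A)^{-\frac12-\epsilon}$. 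Your attribution of the bracket terms to ``subprincipal pieces of $\Phi_k$ with an extra $(\sin d)^{-1}$ axial singularity'' is incorrect: the Hadamard coefficients $U_k$ are smooth on $Y\times Y$, and the subprincipal terms $1\leq j\leq n+1$ contribute only uniformly bounded quantities. As written, your plan would deliver only a bound of the form $(\cos d_Y-\cos A)^{-\epsilon}$ and would miss (or misplace) the mechanism behind the second, more singular case; the Riemann--Liouville treatment of the leading odd-dimensional term is the missing ingredient.
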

We postpone the proof of Proposition \ref{lemT} to Section \ref{proof}, and now utilize this Proposition to verify Proposition \ref{prop:T2}.\vspace{0.2cm}

{\bf\emph{The proof of Proposition \ref{prop:T2}}:}  First, we define
\begin{align}\label{def-K1}
K_1(&(r,y),(r^\prime,y^\prime)):=\mathbbm{1}_{\{y,y^\prime: d_Y(y,y^\prime)<A\}}\mathbbm{1}_{\{r,r^\prime>0;|r-r^\prime|<t<r+r^\prime\}}t^{2(\epsilon-\frac n2)}(rr^\prime)^{-\epsilon}\\
&\times\Big[(A^2-d_Y^2(y,y^\prime))^{-\epsilon}+\frac1{d_Y(y,y^\prime)}(A^2-d_Y^2(y,y^\prime))^{\frac12-\epsilon}\Big]
\times \mathbbm{1}_{\{\cos A=\frac{r^2+{r^\prime}^2-t^2}{2rr^\prime}>0,0<A<\frac\pi2\}},\nonumber
\end{align}
and
\begin{align}\label{def-K2}
K_2(&(r,y),(r^\prime,y^\prime)):=\mathbbm{1}_{\{y,y^\prime;d_Y(y,y^\prime)<A\}}\mathbbm{1}_{\{r,r^\prime>0;|r-r^\prime|<t<r+r^\prime\}}t^{2(\epsilon-\frac n2)}(rr^\prime)^{-\epsilon}\nonumber\\
&\times\Big[(\pi-A)^{-\frac12-\epsilon}(A-d_Y(y,y^\prime))^{-\epsilon}+\frac{(\pi-A)^{\frac12-\epsilon}}{d_Y(y,y^\prime)}(A-d_Y(y,y^\prime))^{\frac12-\epsilon}\Big]\nonumber\\
&\times \mathbbm{1}_{\{\cos A=\frac{r^2+{r^\prime}^2-t^2}{2rr^\prime}<0,\frac\pi2\leq A<\pi\}}.
\end{align}
Then it follows from \eqref{est-KT21} that there exists a constant $C$ such that
\begin{align}
T_2((r,y),(r^\prime,y^\prime))\leq C(K_1+K_2)((r,y),(r^\prime,y^\prime))
\end{align}
for all $d_Y(y,y^\prime)>0$.
Therefore, to prove Proposition \ref{prop:T2}, it is sufficient to verify the following lemma from Riesz convexity theorem:
\begin{lemma}\label{lemK}
Let $\omega\in\mathbb{C}$ with $\Re\omega=\epsilon$, where $\epsilon\in(\frac12,1)$. Then there exists a constant $C_{n,\epsilon}$ satisfying
\begin{align}\label{est-K1}
&\sup_{(r,y)\in C(Y)}\int_{C(Y)}|K_i((r,y),(r^\prime,y^\prime))|d\mu(r^\prime,y^\prime)<C_\epsilon,\qquad i=1,2,\\  \label{est-K2}
&\sup_{(r^\prime,y^\prime)\in C(Y)}\int_{C(Y)}|K_i((r,y),(r^\prime,y^\prime))|d\mu(r,y)<C_\epsilon,\qquad i=1,2.
\end{align}
\end{lemma}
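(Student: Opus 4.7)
Both kernels $K_1,K_2$ are symmetric under $(r,y)\leftrightarrow(r',y')$, since $A(r,r',t)$ is symmetric in $r,r'$ and $d_Y$ is symmetric in its arguments. Consequently \eqref{est-K1} and \eqref{est-K2} are equivalent, and it suffices to establish \eqref{est-K1}. The plan is to perform the $y'$-integration in geodesic polar coordinates on $Y$, then to reduce the remaining one-dimensional integral in $r'$ by a dilation argument, and finally to close the estimate using the boundary asymptotics of the angle $A$.

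For the $y'$-step, fix $(r,r',t)$ in the support, so that $0<A<\pi$. Because $\rho>\pi$, the ball $\{y'\in Y:d_Y(y,y')<A\}$ lies in a normal neighbourhood of $y$, and geodesic polar coordinates yield $d\mu_Y=J(y;s,\theta)\,s^{n-2}\,ds\,d\sigma(\theta)$ with $J$ comparable to $1$. Performing the angular integration and rescaling $s=A\tau$ gives
\begin{equation*}
\int_Y|K_1|\,d\mu_Y(y')\lesssim t^{2(\epsilon-\frac n2)}(rr')^{-\epsilon}\,A^{n-1-2\epsilon},
\end{equation*}
\begin{equation*}
\int_Y|K_2|\,d\mu_Y(y')\lesssim t^{2(\epsilon-\frac n2)}(rr')^{-\epsilon}\bigl[(\pi-A)^{-\frac12-\epsilon}A^{n-1-\epsilon}+(\pi-A)^{\frac12-\epsilon}A^{n-\frac32-\epsilon}\bigr],
\end{equation*}
the implicit Beta-type constants being finite because $\tfrac12<\epsilon<1$ and $n-1\ge 2$.

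For the $r'$-step I would exploit the identity
\begin{equation*}
4r^2r'^2\sin^2 A=\bigl[t^2-(r-r')^2\bigr]\bigl[(r+r')^2-t^2\bigr]
\end{equation*}
to change variables from $r'$ to $A$ (equivalently to $\cos A$). Under the dilation $(r,r',t)\mapsto(\lambda r,\lambda r',\lambda t)$, which leaves $A$ fixed and preserves the combined weight $(rr')^{-\epsilon}t^{2\epsilon-n}r'^{n-1}$ against $dr'$, I may set $t=1$ and need only bound the resulting one-dimensional integral uniformly in $r>0$. Near an endpoint of the admissible $r'$-interval at which $A\to 0$, the identity above forces $A\asymp\sqrt{\text{dist to endpoint}}$, and hence $|dr'/dA|\asymp A$; symmetrically, near an endpoint at which $A\to\pi$, one has $\pi-A\asymp\sqrt{\text{dist to endpoint}}$ and $|dr'/dA|\asymp\pi-A$. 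Plugging these Jacobians into the $y'$-integrated kernels reduces each estimate to a finite sum of elementary integrals in $A$ over a subinterval of $(0,\pi)$, whose convergence is guaranteed by the range $\tfrac12<\epsilon<1$.

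The main obstacle I foresee is the uniformity of the final bound as $r\downarrow 0$ or $r\uparrow 1$: the admissible $r'$-interval degenerates in these limits and the weight $(rr')^{-\epsilon}$ becomes singular, so the proof must track the precise cancellation between this singularity, the contracting integration range, and the vanishing of the Jacobian along the shrinking interval. A secondary difficulty is the borderline factor $(\pi-A)^{-\frac12-\epsilon}$ in the $K_2$-estimate as $\epsilon\uparrow 1$, for which the sharp square-root asymptotic of $\pi-A$ in $r'$ traced through the identity above (rather than a mere one-sided bound) is essential in order to integrate against the Jacobian.
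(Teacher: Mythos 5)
Your $y'$-step is exactly the paper's: geodesic polar coordinates (legitimate since $\rho>\pi$), the rescaling $h=As$, and the bound $A^{n-1-2\epsilon}$ for $K_1$ (for $K_2$ the paper simply bounds the $Y$-integral by a constant, since there $A\geq\pi/2$, and keeps only the $(\pi-A)$-powers). The symmetry reduction to \eqref{est-K1} is also the paper's. But the $r'$-step, which is where the lemma actually lives, is not carried out, and the asymptotics you propose to rely on are wrong as stated. From $1-\cos A=\frac{t^2-(r-r')^2}{2rr'}$ and $1+\cos A=\frac{(r+r')^2-t^2}{2rr'}$ one gets, near the endpoints, $A\asymp\bigl(\delta/(rr')\bigr)^{1/2}$ and $\pi-A\asymp\bigl(\delta/(rr')\bigr)^{1/2}$ with $\delta$ the distance to the endpoint, hence $|dr'/dA|\asymp A\,rr'$ (resp.\ $(\pi-A)\,rr'$), not $\asymp A$ (resp.\ $\pi-A$): the $rr'$-dependent factors you drop are precisely what must balance the singular weight $t^{2\epsilon-n}(rr')^{-\epsilon}r'^{\,n-1}$, and even after scaling $t=1$ they do not disappear ($r$ can be arbitrarily large on the support of $K_1$, and $rr'\to0$ is possible on the support of $K_2$). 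You flag this yourself as "the main obstacle," but resolving it is the proof, so as it stands there is a genuine gap. A second unaddressed point: for $r>t$ the map $r'\mapsto A$ is not monotone (it vanishes at both endpoints $r'=r\pm t$ and attains its maximum $\arccos(\sqrt{r^2-t^2}/r)<\pi/2$ at $r'=\sqrt{r^2-t^2}$), so your change of variables to $A$ requires splitting into two branches, which the sketch ignores.

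For comparison, the paper avoids any Jacobian computation: after the $Y$-integration it substitutes the equivalences $A^2\sim\frac{t^2-(r-r')^2}{2rr'}$ (when $A<\pi/2$) and $\pi-A\sim\Bigl(\frac{(r+r')^2-t^2}{rr'}\Bigr)^{1/2}$ (when $A\geq\pi/2$) directly into the kernel bound and estimates the resulting one-dimensional integrals in $r'$ itself, splitting into the cases $r\geq t$ and $r<t$ for $K_1$ and using the elementary substitutions $h=r'/r$, $h=\frac{r'}{r}-(\frac{t}{r}-1)$; all the $r$-, $r'$- and $t$-powers are tracked explicitly there, which is exactly the bookkeeping your sketch defers. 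If you want to push your change-of-variables route through, you would need to keep the full identity $4r^2r'^2\sin^2A=[t^2-(r-r')^2][(r+r')^2-t^2]$ with its $rr'$ factors in both the Jacobian and the endpoint asymptotics, and handle the two monotone branches separately; at that point the computation essentially reproduces the paper's direct estimate.
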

\begin{proof}Since
$$
\int_{C(Y)}|K_i((r,y),(r^\prime,y^\prime))|d\mu(r^\prime,y^\prime)=\int_{C(Y)}|K_i((r,y),(r^\prime,y^\prime))|d\mu(r,y),
$$
we only need to check that \eqref{est-K1} holds for $i=1,2$.

{\bf{\emph{The proof for $i=1$}}}: First of all, using the variable change $h=As$, one has
\begin{align*}
&\int_Y\Big[(A^2-d_Y^2(y,y^\prime))^{-\epsilon}+\frac1{d_Y(y,y^\prime)}(A^2-d_Y^2(y,y^\prime))^{\frac12-\epsilon}\Big]\mathbbm{1}_{\{y,y^\prime;d_Y(y,y^\prime)<A\}}d\mu_Y(y^\prime)\\
\lesssim&\int_0^A\Big[(A^2-h^2)^{-\epsilon}+\frac1h(A^2-h^2)^{\frac12-\epsilon}\Big]h^{n-2}\;dh\\
=&A^{n-1-2\epsilon}\int_0^1\Big[(1-s^2)^{-\epsilon}s^{n-2}+(1-s^2)^{\frac12-\epsilon}s^{n-3}\Big]\;ds\\
\lesssim_{\epsilon}& A^{n-1-2\epsilon}
\end{align*}
for $n\geq 3$ and $\epsilon\in(\frac12,1)$. Note that
$$2\sin^2\frac A2=1-\cos A=1-\frac{r^2+{r^\prime}^2-t^2}{2rr^\prime},\quad A\in(0,\frac\pi2),$$
which implies
$$A^2\sim\frac{t^2-(r-r^\prime)^2}{2rr^\prime}.$$
Therefore, we only need to show
\begin{equation}\label{re-K1}
t^{2(\epsilon-\frac n2)}\int_{\{r,r^\prime:|r-r^\prime|<t<r+r^\prime\}}\Big[\frac{t^2-(r-r^\prime)^2}{2rr^\prime}\Big]^{\frac{n-1}2}[t^2-(r-r^\prime)^2]^{-\epsilon}{r^\prime}^{n-1}\;
dr^\prime\lesssim_\epsilon1.
\end{equation}
In the following, we divide into two cases : $r\geq t$ and $r< t$.

\emph{Case a}: $r\geq t$. A simple computation yields that \eqref{re-K1} can be controlled by
\begin{align*}
&t^{2(\epsilon-\frac n2)}\int_{\{r,r^\prime:|r-r^\prime|<t<r+r^\prime\}}(t^2-(r-r^\prime)^2)^{\frac{n-1}2-\epsilon}\Big(\frac{r^\prime}{r}\Big)^{\frac{n-1}2}\;dr^\prime\\
\lesssim& t^{2(\epsilon-\frac n2)}(t^2)^{\frac{n-1}2-\epsilon}r\int_{\{r,r^\prime:|1-\frac{r^\prime}r|<\frac tr<1+\frac{r^\prime}r\}}\Big(\frac{r^\prime}r\Big)^{\frac{n-1}2}\;d\Big(\frac{r^\prime}r\Big)\\
\lesssim&\Big(\frac{t}r\Big)^{-1}\int_{1-\frac tr}^{1+\frac tr}h^{\frac {n-1}2}\;dh
\lesssim_\epsilon1,
\end{align*}
where we have used $n\geq3$ and $\epsilon\in(\frac12,1)$ again.

\emph{Case b}: $r< t$. In this case, the fact $|r-r^\prime|<t<r+r^\prime$ implies $r^\prime<2t.$ Then, we have
\begin{align*}
&t^{2(\epsilon-\frac n2)}\Big(\frac{t^2-(r-r^\prime)^2}{2rr^\prime}\Big)^{\frac{n-1}2}[t^2-(r-r^\prime)^2]^{-\epsilon}{r^\prime}^{n-1}\\
\lesssim&t^{2(\epsilon-\frac n2)}{r^\prime}^{n-1}\frac{t^2-(r-r^\prime)^2}{2rr^\prime}[(t+|r-r^\prime|)(t-|r-r^\prime|)]^{-\epsilon}\\
\lesssim&t^{2(\epsilon-\frac n2)}t^{n-2}\frac1r t^{1-\epsilon}(t-|r-r^\prime|)^{1-\epsilon}\\
=& \Big(\frac tr\Big)^{\epsilon-1}\frac1r\Big(\frac {t-|r-r^\prime|}r\Big)^{1-\epsilon}\\
\lesssim&\frac1r\Big(\frac {t-|r-r^\prime|}r\Big)^{1-\epsilon}.
\end{align*}
By changing variable $h=\frac{r^\prime}r$, the term in \eqref{re-K1} can be bounded by
\begin{align*}
& \int_{\{r,r^\prime;|r-r^\prime|<t<r+r^\prime\}}\frac1r\Big(\frac {t-|r-r^\prime|}r\Big)^{1-\epsilon}\;dr^\prime\\
\lesssim_{\epsilon}&\int_{\frac tr-1}^{1+\frac tr}\Big(\frac tr-|h-1|\Big)^{1-\epsilon}\;dh
\lesssim_{\epsilon}1,
\end{align*}
since $1-\epsilon>0$, and $\frac tr-|h-1|\leq2$ for all $h\in[\frac tr-1,1+\frac tr].$ Hence, the proof of Lemma \ref{lemK} with $i=1$ follows. \vspace{0.2cm}

{\bf{\emph{The proof for $i=2$}:}} On the one hand, we can acquire
\begin{align*}
&\int_Y\Big[(A-d_Y(y,y^\prime))^{-\epsilon}+\frac1{d_Y(y,y^\prime)}(A-d_Y(y,y^\prime))^{\frac12-\epsilon}\Big]\mathbbm{1}_{\{y,y^\prime;d_Y(y,y^\prime)<A\}}d\mu_Y(y^\prime)\\
\lesssim&\int_0^A\Big[(A-h)^{-\epsilon}+\frac1h(A-h)^{\frac12-\epsilon}\Big]h^{n-2}\;dh\\
\lesssim_\epsilon&1
\end{align*}
for $n\geq3$, $\frac12<\epsilon<1$, and $A\in[\frac\pi2,\pi)$.

On the other hand, for $\frac\pi2\leq A=\arccos\frac{r^2+{r^\prime}^2-t^2}{2rr^\prime}<\pi$, we conclude that
$$\pi-A=2\cdot\frac{\pi-A}2\sim2\cdot\sin\frac{\pi-A}2=2\sqrt{\frac{1-\cos(\pi-A)}2}=2\Big(\frac{(r+r^\prime)^2-t^2}{4rr^\prime}\Big)^{\frac12}.$$
Then, it suffices to show
\begin{align}\label{re-K2}
\int_{\mathcal{N}_{t,r}}t^{2(\epsilon-\frac n2)}(rr^\prime)^{-\epsilon}\Big(\frac{(r+r^\prime)^2-t^2}{4rr^\prime}\Big)^{-\frac12(\frac12+\epsilon)}{r^\prime}^{n-1}\;dr^\prime\lesssim_\epsilon1,
\end{align}
where the notation $\mathcal{N}_{t,r}$ denotes
\begin{align}
\mathcal{N}_{t,r}=\{r^\prime>0:|r-r^\prime|<t<r+r^\prime\quad\text{and}\quad r^2+{r^\prime}^2\leq t^2\}.
\end{align}
The inequality \eqref{re-K2} can be controlled by
\begin{align*}
&\int_{\mathcal{N}_{t,r}}t^{2(\epsilon-\frac n2)}(rr^\prime)^{-\epsilon}\Big(\frac{(r+r^\prime)^2-t^2}{4rr^\prime}\Big)^{-\epsilon}\Big(\frac{(r+r^\prime)^2-t^2}{4rr^\prime}\Big)^{\frac12(-\frac12+\epsilon)}{r^\prime}^{n-1}\;dr^\prime\\
\lesssim_\epsilon&\int_{\mathcal{N}_{t,r}}t^{2(\epsilon-\frac n2)}[(r+r^\prime)^2-t^2]^{-\epsilon}t^{n-1}\;dr^\prime\\
\lesssim_\epsilon&\int_{\mathcal{N}_{t,r}}\Big(\frac tr\Big)^{\epsilon-1}\Big(\frac{r+r^\prime-t}r\Big)^{-\epsilon}\;d\Big(\frac{r^\prime}r\Big)\\
\lesssim_\epsilon&\int_{\{|r^\prime-r|<t<r+r^\prime\}}\Big(\frac{r+r^\prime-t}r\Big)^{-\epsilon}\;d\Big(\frac{r^\prime}r\Big)\\
=_\epsilon&\int_0^2h^{-\epsilon}\;dh
\lesssim_\epsilon 1,
\end{align*}
where we have used the changing variable $h=\frac{r^\prime}r-\Big(\frac tr-1\Big)$ and $\epsilon<1.$

Here we finish the proof of Lemma \ref{lemK} with $i=2$, and so Lemma \ref{lemK} follows.
\end{proof}

\subsection{Proof of Theorem \ref{est-wave}}\label{proof-1}

In this subsection, we illustrate Theorem \ref{est-wave} by utilizing Theorem \ref{theo-Lp} and Stein's interpolation theorem.

Recall the analytic operator family $F_{w,t}(h)$ in \eqref{equ:fanaoper}, and from \cite[p. 463]{T86}, we can see that there exists two constants $C$ and $c$ such that
\begin{equation*}
|F_{w,t}(h)|\leq Ce^{c(\frac{n+1}2-\epsilon)|s|},\quad\forall t,h>0,
\end{equation*}
which implies
\begin{equation}
\|F_{\frac{n+1}2+(\frac{n+1}2-\epsilon)si,t}(\mathcal{L}_A)\|_{L^2(C(Y))\rightarrow L^2(C(Y))}\leq Ce^{c(\frac{n+1}2-\epsilon)|s|}.
\end{equation}
On the other hand, we observe that from \eqref{equ:fanaoper}
$$F_{\frac{n-1}2,t}=\frac{\sin{t\sqrt h}}{t\sqrt h}.$$
Fix $\frac12<\epsilon<1$ and choose $\theta_\epsilon\in(0,1)$ satisfying
\begin{equation*}
\frac{n-1}2=\epsilon+\Big(-\epsilon+\frac{n+1}2\Big)\theta_\epsilon.
\end{equation*}
Also, we set
\begin{equation*}
\frac1{p_\epsilon}=(1-\theta_\epsilon)+\frac{\theta_\epsilon}2=1-\frac{\theta_\epsilon}2\quad\text{or}\quad\frac1{p_\epsilon}=\frac{\theta_\epsilon}2.
\end{equation*}
According to Theorem \ref{theo-Lp} and Stein's interpolating theorem  \cite{S56}, there exists a constant $C_{p_{\epsilon}}>0$ such that
\begin{align*}
\Big\|\frac{\sin t\sqrt{\mathcal{L}_A}}{t\sqrt{\mathcal{L}_A}}\Big\|_{L^{p_\epsilon}(C(Y))\rightarrow L^{p(\epsilon)}(C(Y))}\leq C_{p_{\epsilon}},
\end{align*}
where $p_{\epsilon}$ satisfies
$$\frac1{p_\epsilon}\in\Big(\frac12\frac{n-3}{n-1},\frac12\frac{n-2}2\Big)\bigcup\Big(\frac12+\frac1n,\frac12+\frac1{n-1}\Big)$$
when $\epsilon$ goes through the whole interval $(\frac12,1)$.

This together with Riesz Convex theorem establishes Theorem \ref{est-wave}.

\subsection{Some technical lemmas}\label{proof12}
In this subsection, we focus on the proof of crucial Lemma \ref{lem:kerest1} and Lemma \ref{lem:kerest2} established in previous subsection.

{\bf{\emph{The proof of Lemma \ref{lem:kerest1}:}}} Let $\lambda$ be one eigenvalue of the operator $(i\nabla_y+{\bf A})^2$ on $Y$ such that
$$(i\nabla_y+{\bf A})^2\varphi(y)=\lambda\varphi(y),$$
where $\varphi(y)$ is the corresponding eigenfunction. Since $Y$ is a closed manifold,
the spectral theorem ensures that the set of eigenvalues, denoted by $\{\lambda_j\}_{j=0}^{\infty}$, is discrete. Moreover, these eigenvalues satisfy the ordering $0 = \lambda_0 < \lambda_1 \leq \lambda_2 \leq \cdots$.
Consequently, we have
\begin{align*}
\begin{cases}
(i\nabla_y+{\bf A})^2\varphi_j(y)=\lambda_j\varphi_j(y),\quad\varphi_j(y)\in C^\infty(Y),\\
\int_Y|\varphi_j(y)|^2\;d\mu_Y(y)=1.
\end{cases}
\end{align*}
The set of eigenfunctions $\{\varphi_j\}_{j=0}^\infty$ constitutes an orthonormal basis of the Hilbert space $L^2(Y)$.

Therefore, we can write $H(y,y^\prime)$ as
\begin{equation}
H(y,y^\prime)=\sum_{j=0}^{+\infty}\cos(\pi\sqrt{\lambda_j+\beta^2})e^{-h\sqrt{\lambda_j+\beta^2}}\varphi_j(y)\overline{\varphi_j(y^\prime)}.
\end{equation}
Note that $H(y,y^\prime)=\overline{H(y^\prime,y)}$ and $Y$ is compact, to prove Lemma \ref{lem:kerest1}, it suffices to show
\begin{equation}\label{Hyy}
\sup_{y\in Y}\int_Y|H(y,y^\prime)|^2\;d\mu_Y(y^\prime)\lesssim e^{-2h\beta}(1+h^{\frac{1-n}2})^2.
\end{equation}
Since
\begin{align*}
\int_Y|H(y,y^\prime)|^2\;d\mu_Y(y^\prime)&=\sum_{j=0}^{+\infty}\cos^2(\pi\sqrt{\lambda_j+\beta^2})e^{-2h\sqrt{\lambda_j+\beta^2}}|\varphi_j(y)|^2\\
&\lesssim e^{-\beta h}\sum_{j=0}^{+\infty}e^{-h\sqrt{\lambda_j+\beta^2}}|\varphi_j(y)|^2\\
&=e^{-\beta h}e^{-h\sqrt{(i\nabla_y+{\bf A})^2+\beta^2}}(y,y)\\
&=e^{-\beta h}\frac h{2\sqrt{\pi}}\int_0^{+\infty}u^{-\frac23}e^{-\frac{h^2}{4u}}e^{-\beta^2u}e^{u(i\nabla_y+{\bf A})^2}(y,y)\;du\\
&\lesssim e^{-2h\beta}(1+h^{ {1-n}}),
\end{align*}
which implies \eqref{Hyy}. Thus, we complete the proof of Lemma \ref{lem:kerest1}.\vspace{0.2cm}

{\bf{\emph{The proof of Lemma \ref{lem:kerest2}:}}} To achieve Lemma \ref{lem:kerest2}, we divide into $\mathcal{A}\geq1$ and $\mathcal{A}<1$.

\emph{Case a:} $\mathcal{A}\geq1$. 
Using the inequality \eqref{ine:kere1}, one has
\begin{align}\label{est-T1}
&\int_{\mathcal{A}}^{+\infty}(\|e^{-h\nu}\cos\pi\nu\|_*+\|e^{-h\nu}\cos\pi\nu\|^*)|(\cosh h-\cosh\mathcal{A})^{\beta-w}|\;dh\nonumber\\
\lesssim& \int_{\mathcal{A}}^{+\infty} e^{-h\beta}(\cosh h-\cosh\mathcal{A})^{\beta-\epsilon}\;dh\nonumber\\
=&\Big(\frac \pi2\Big)^{-\frac12}e^{-i\pi(\epsilon-\frac {n-1}2)}\Gamma\Big(\frac{n}2-\epsilon\Big)(\sinh \mathcal{A})^{\frac{n-1}2-\epsilon}Q_{\beta-\frac12}^{\epsilon-\frac{n-1}2}(\cosh\mathcal{A}),
\end{align}
where we have employed \eqref{gammaQ} for the integral transformation.
%

On the other hand, from \cite[p.197]{MOS66}, the following asymptotic formula for the Legendre function $Q_a^b(z)$ holds
\begin{equation*}
Q_a^b(z)=e^{i\pi b}2^{-a-1}\sqrt{\pi}\frac{\Gamma(a+b+1)}{\Gamma(\frac32+a)}z^{-a-1}(1+o(1)),\quad\forall|z|\gg1.
\end{equation*}
Therefore, it follows that
\begin{align}
\eqref{est-T1}
\lesssim_{\epsilon} (\sinh \mathcal{A})^{\frac{n-1}2-\epsilon}(\cosh \mathcal{A})^{-\frac{n-1}2}\lesssim_{\epsilon} (\cosh\mathcal{A}-1)^{-\epsilon}
\end{align}
for $\frac12<\epsilon<1$, $n\geq3$ and $\mathcal{A}\geq1$.\vspace{0.2cm}

\emph{Case b:} $0<\mathcal{A}<1$. We decompose
\begin{align}\label{ine:A1}
&\int_{\mathcal{A}}^{+\infty}(\|e^{-h\nu}\cos\pi\nu\|_*+\|e^{-h\nu}\cos\pi\nu\|^*)|(\cosh h-\cosh\mathcal{A})^{\beta-w}|\;dh\nonumber\\
=&\Big\{\int_{\mathcal{A}}^{2\mathcal{A}}+\int_{2\mathcal{A}}^{1+\mathcal{A}}+\int_{1+\mathcal{A}}^{+\infty}\Big\}
(\|e^{-h\nu}\cos\pi\nu\|_*+\|e^{-h\nu}\cos\pi\nu\|^*)|(\cosh h-\cosh\mathcal{A})^{\beta-w}|\;dh\nonumber\\
:=&H_1+H_2+H_3.
\end{align}
\emph{The estimation of $H_3$:} For the term $H_3$, we first notice that
\begin{equation}\label{e-cosh}
\cosh a-\cosh b=2\sinh\frac{a+b}2\sinh\frac{a-b}2.
\end{equation}
Then, we use the variable change $h=s+\mathcal{A}$ to achieve
\begin{align*}
H_3&\lesssim\int_{1+\mathcal{A}}^\infty e^{-h\beta}(\cosh h-\cosh\mathcal{A})^{\beta-\epsilon}\;dh\\
&=e^{-\beta \mathcal{A}}\int_1^{+\infty}e^{-\beta s}(\cosh (\mathcal{A}+s)-\cosh\mathcal{A})^{\beta-\epsilon}\;ds\\
&=e^{-\beta \mathcal{A}}\int_1^{+\infty}e^{-\beta s}\Big(2\sinh\frac{s+2\mathcal{A}}2\sinh\frac{s}2\Big)^{\beta-\epsilon}\;ds\\
&\lesssim_{\epsilon}\int_1^{+\infty}e^{-\beta s}(e^s)^{\beta-\epsilon}\;ds
\lesssim_{\epsilon}\frac1\epsilon,
\end{align*}
because of the fact
$\sinh s\sim e^s$ when $s\geq1$.

\emph{The estimation of $H_2$ and $H_1$:} For $0<\mathcal{A}<1$, we observe that
\begin{align*}
\mathcal{A}^2\sim4\sinh^2\frac{\mathcal{A}}2=2(\cosh\mathcal{A}-1).
\end{align*}
Hence, by \eqref{ine:kere1} and \eqref{e-cosh}, we can acquire
\begin{align*}
H_2&\lesssim\int_{2\mathcal{A}}^{1+\mathcal{A}}h^{1-n}\Big(2\sinh\frac{h+\mathcal{A}}2\sinh\frac{h-\mathcal{A}}2\Big)^{\beta-\epsilon}\;dh\\
&\lesssim\int_{2\mathcal{A}}^{1+\mathcal{A}}h^{1-n}h^{2\beta-2\epsilon}\;dh\\
&\lesssim_{\epsilon}\frac{1}{\epsilon}\mathcal{A}^{-2\epsilon}
\lesssim_{\epsilon} (\cosh\mathcal{A}-1)^{-\epsilon}.
\end{align*}
In the second inequality we utilized the fact
$$\sinh \frac{h+\mathcal{A}}2\sim\frac{h+\mathcal{A}}2\sim h,\ \  \sinh \frac{h-\mathcal{A}}2\sim\frac{h-\mathcal{A}}2\sim h,\ \ \text{for}\ \ 2\mathcal{A}\leq h\leq1+\mathcal{A},$$ and in the last inequality, we used $\frac12<\epsilon<1$, $n\geq3$ and $\beta=(n-2)/2$.

Next, we aim to estimate the term $H_1$. First notice that
$$\sinh \frac{h+\mathcal{A}}2\sim\frac{h+\mathcal{A}}2\sim \mathcal{A},\ \ \sinh \frac{h-\mathcal{A}}2\sim\frac{h-\mathcal{A}}2,\ \text{for}\ \mathcal{A}\leq h\leq2\mathcal{A}.$$
Using the similar argument as above, we conclude that
\begin{align*}
H_1\lesssim&\int_{\mathcal{A}}^{2\mathcal{A}}h^{1-n}\Big(2\sinh\frac{h+\mathcal{A}}2\sinh\frac{h-\mathcal{A}}2\Big)^{\beta-\epsilon}\;dh\\
\lesssim&\mathcal{A}^{1-n}\mathcal{A}^{\beta-\epsilon}\int_{\mathcal{A}}^{2\mathcal{A}}{(h-\mathcal{A})}^{\beta-\epsilon}\;dh\\
=&\frac{1}{\frac n2-\epsilon}\mathcal{A}^{-2\epsilon}
\lesssim_{\epsilon} (\cosh\mathcal{A}-1)^{-\epsilon},
\end{align*}
So far, we have finished the proof of Lemma \ref{lem:kerest2}.

\section{The proof of Proposition \ref{lemT}}\label{proof}
Before proceeding with the proof of Proposition \ref{lemT}, we first give some key results related to the Hadamard parametrix of $\cos (s\nu)$ with $\nu=\sqrt{(i\nabla_y+{\bf A})^2+\alpha^2}$, see the Appendix for details.

\begin{lemma}\label{hp}
Suppose that $Y$ is a compact connected Riemannian manifold without boundary for dimension $n-1$, and its injectivity radius $\rho>\pi$. Then, for $K\in\mathbb{N}^*$, $0<h\leq\pi$, and $s>0$, we have
\begin{equation}\label{def:cos}
\cos (s\nu)(y,y^\prime)=\sum_{k=0}^KU_k(y,y^\prime)s\frac{(s^2-d^2_Y(y,y^\prime))^{k-\frac n2}_+}{\Gamma(k-\frac n2+1)}+sC_K(s,y,y^\prime),
\end{equation}
where

1. $U_k(y,y^\prime)\in C^\infty(Y\times Y)$.

2. $C_K(s,y,y^\prime)\in C([0,\pi]\times Y\times Y)$. Moreover, for $K$ large enough, for example $K=n+1$, $C_K(s,y,y^\prime)=0$ when $d_Y(y,y^\prime)>s$.\\

3. For $g\in C^\infty(Y)$ and $y\in Y$, there holds
\begin{equation}
\overline{g_y}(h)=\int_{S^h(y)}g(y^\prime)\;d\sigma(y^\prime),\quad 0<h<\rho,
\end{equation}
where $S^h(y)=\{y^\prime\in Y;d_Y\{y,y^\prime\}=h\}$ and $d\sigma(y^\prime)$ denotes the induced surface measure. Then, for $\frac n2-k\geq1$, the direction of the distribution $\frac{(s^2-d^2_Y(y,y^\prime))^{k-\frac n2}_+}{\Gamma(k-\frac n2+1)}$ (for $0<s<\rho$) is as follows
\begin{align}
\left\langle\frac{(s^2-d^2_Y(y,y^\prime))^{k-\frac n2}_+}{\Gamma(k-\frac n2+1)},g\right\rangle(y)=\left\langle\frac{(s^2-h^2)^{k-\frac n2}_+}{\Gamma(k-\frac n2+1)},\overline{g_y}(h)\right\rangle\nonumber\\=
\begin{cases}\label{eo}
(\frac1{2s}\frac\partial{\partial s})^{\frac n2-k-1}[\frac1{2s}\overline{g_y}(s)],&\text{for}\ \ \frac n2\in\mathbb{N},\\
(\frac1{2s}\frac\partial{\partial s})^{\frac n2-k-1}\int_0^s\frac{(s^2-h^2)^{-\frac12}}{\Gamma(\frac12)}\overline{g_y}(h)\;dh,&\text{for}\ \ \frac n2 \notin\mathbb{N}.
\end{cases}
\end{align}
\end{lemma}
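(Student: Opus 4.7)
The plan is to follow the classical Hadamard parametrix construction, adapted to accommodate the magnetic operator $(i\nabla_y+{\bf A})^2$. First I would fix $y\in Y$ and work in geodesic normal coordinates $(h,\theta)$ centered at $y$; by the hypothesis $\rho>\pi$ these coordinates are valid on the geodesic ball $B(y,\pi)$. Setting $W_k(s,h)=s(s^2-h^2)_+^{k-n/2}/\Gamma(k-n/2+1)$, I recall that $W_k$ is (up to normalization) a radial fundamental solution of the flat wave equation on $\R^n$, and that the flat d'Alembertian applied to $W_k$ reproduces $W_{k-1}$ in a suitable distributional sense. I would then write the ansatz
\[
\cos(s\nu)(y,y') \;=\; \sum_{k=0}^{K}U_k(y,y')\,W_k(s,d_Y(y,y'))+s\,C_K(s,y,y').
\]

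Applying $(\partial_s^2+\nu^2)$ to this ansatz and expanding the magnetic Laplacian in geodesic polar coordinates, I would separate the coefficients of $W_{k-1}$ at each level to obtain a hierarchy of first-order transport equations for $U_k$ along geodesic rays emanating from $y$. The leading equation determines $U_0$ in terms of the Jacobian density $\Theta(y,y')$ of the exponential map at $y$ and a magnetic holonomy factor $\exp(-i\int_\gamma {\bf A}\cdot d\ell)$ along the minimizing geodesic $\gamma$ from $y$ to $y'$; the subsequent $U_k$ are obtained by quadrature from $U_{k-1}$ against smooth weights. Because $\rho>\pi$, the minimizing geodesic joining points in $B(y,\pi)$ is unique and depends smoothly on its endpoints, so each $U_k$ is smooth on a neighborhood of the diagonal; multiplying by a cutoff supported in $\{d_Y<\rho\}$ extends each $U_k$ to an element of $C^\infty(Y\times Y)$.

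For the remainder, a direct computation gives $(\partial_s^2+\nu^2)[sC_K] = U_K\,W_{K-1}+(\text{lower-order smooth terms})$; for $K\geq n+1$ the right-hand side is a continuous function on $[0,\pi]\times Y\times Y$. Applying Duhamel's principle for $\cos(s\nu)/\nu$ together with finite propagation speed for the wave equation on $Y$ then yields $C_K\in C([0,\pi]\times Y\times Y)$ as well as the support condition $C_K(s,y,y')=0$ whenever $d_Y(y,y')>s$. This settles items 1 and 2.

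For item~3, the pairing formula follows from the Riesz distribution calculus in the radial variable. Writing the pairing against $g\in C^\infty(Y)$ in geodesic polar coordinates converts the integration over $Y$ into a radial integral against the spherical mean $\overline{g_y}(h)$. For $\mu=k-\tfrac n2\leq -1$ I would interpret $(s^2-h^2)_+^{\mu}/\Gamma(\mu+1)$ by analytic continuation in $\mu$, using the one-variable identity $\frac{1}{2s}\partial_s\bigl[(s^2-h^2)_+^{\mu+1}/\Gamma(\mu+2)\bigr]=(s^2-h^2)_+^{\mu}/\Gamma(\mu+1)$, iterated $\tfrac n2-k-1$ times. When $\tfrac n2\in\mathbb{N}$, iterating until $\mu=-1$ converts the distribution into a surface delta on $\{h=s\}$, producing the first case of \eqref{eo}; when $\tfrac n2\notin\mathbb{N}$, one stops at $\mu=-\tfrac12$, where $(s^2-h^2)_+^{-1/2}/\Gamma(\tfrac12)$ is locally integrable, yielding the integral representation. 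The main obstacle I expect is verifying the compatibility of the formal ansatz with the first-order magnetic perturbation in the transport equations and fixing the normalization of $U_0$ so that the series reproduces the initial condition $\cos(s\nu)|_{s=0}=\mathrm{Id}$, whose Schwartz kernel is the delta distribution on the diagonal.
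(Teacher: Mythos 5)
Your proposal follows essentially the same route as the paper's appendix: the classical Hadamard parametrix construction in geodesic normal coordinates, with transport equations in which the first-order magnetic term produces exactly the holonomy phase in the leading amplitude (the paper's $\alpha_0$, obtained from $\rho(x)$ with $b_k=a_k+2iA_k$), the higher amplitudes obtained by quadrature, the remainder handled by Duhamel/energy estimates with Sobolev embedding and finite propagation speed, and item 3 by the standard Riesz distribution calculus in the radial variable. This matches the paper's own construction (its parametrix theorem for $Q=(i\nabla_g+\tilde{\mathbf{A}})^2+\tfrac{(n-2)^2}{4}$ followed by the wave-kernel lemma), so there is no substantive difference in approach.
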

We are now in position to utilize Lemma \ref{hp} to show Proposition \ref{lemT}.\vspace{0.2cm}

{\bf{\emph{The proof of Proposition \ref{lemT}:}}}  For $0< A<\pi$, we write
\begin{equation}
G:=\int_0^A(\cos h-\cos A)^{\beta-w}\cos (h\nu) (y,y^\prime)\;dh .
\end{equation}
Hence, for the sake of demonstrating Proposition \ref{lemT}, we are reduced to show
\begin{align}\label{est:I}
|G|\lesssim_\epsilon & \mathbbm{1}_{\{y,y^\prime: d_Y(y,y^\prime)<A\}}\\
&\times
\begin{cases}
(A^2-d_{Y}^2(y,y^\prime))^{-\epsilon}\Big[1+\frac{(A^2-d_{Y}^2(y,y^\prime))^{\frac12}}{d_{Y}(y,y^\prime)}\Big],\quad0<A<\frac\pi2,\nonumber\\
(A-d_{Y}(y,y^\prime))^{-\epsilon}(\pi-A)^{-\frac12-\epsilon}\Big[1+\frac{(\pi-A)(A-d_{Y}(y,y^\prime))^{\frac12}}{d_{Y}(y,y^\prime)}\Big],\quad\frac\pi2\leq A<\pi.\nonumber
\end{cases}
\end{align}
Denote
\begin{align*}
G_{e}=\int_0^A(\cos h-\cos A)^{\beta-w}hC_{n+1}(h,y,y^\prime)\;dh,
\end{align*}
and
\begin{align*}
G_j=\int_0^A(\cos h-\cos A)^{\beta-w}h\frac{(h^2-d^2_Y(y,y^\prime))^{j-\frac n2}_+}{\Gamma(j-\frac n2+1)}\;dh,\qquad \text{for}\ 0\leq j\leq n+1.
\end{align*}
According to \eqref{def:cos}, it further suffices to prove that \eqref{est:I} holds for $G_{e}$ and $G_j(0\leq j\leq n+1).$

Since $\frac12<\epsilon<1$, $n\geq3$, $0< A<\pi$, and the fact $C_{n+1}(h,y,y^\prime)=0$ when $d_Y(y,y^\prime)>h$, one has
\begin{align}\label{est-rest}
|G_{e}|
&\lesssim\mathbbm{1}_{\{y,y^\prime:d_Y(y,y^\prime)<A\}}\int_{d_Y(y,y^\prime)}^A(\cos h-\cos A)^{\frac{n-2}2-\epsilon}h\;dh\nonumber\\
&\lesssim_\epsilon\mathbbm{1}_{\{y,y^\prime:d_Y(y,y^\prime)<A\}},
\end{align}
where we have used the fact that
for $\alpha<-\frac12$, there exists a constant $C_\alpha>0$ satisfying
$$\int_a^b(\cos h-\cos b)^{-\alpha}h\;dh\leq C_\alpha,\quad\forall\ 0\leq a\leq b\leq\pi.$$
This means \eqref{est:I} holds for $G_{e}$.\vspace{0.2cm}

We now turn to consider \eqref{est:I} for $G_j$ $(0\leq j\leq n+1)$.
For this purpose, we divide into two cases $\frac n2\in\mathbb{N}$ and $\frac n2\notin\mathbb{N}$.\vspace{0.1cm}

{\bf{\emph{Case a}:}}
$\frac n2\in\mathbb{N}$. We first observe the following two inequalities:
\begin{equation}\label{cos-1}
\cos a-\cos b\sim b^2-a^2,\quad\text{if}\quad0\leq a\leq b<\frac\pi2,
\end{equation}
and
\begin{equation}\label{cos-2}
\cos a-\cos b\geq C(b-a)(\pi-b),\quad\text{for all}\quad0\leq a\leq b\quad\text{with}\quad\frac\pi2\leq b<\pi.
\end{equation}
For $j=0$, since the even function $\frac{\sin h}h$ is analytical for $-\pi<h<\pi$, we use \eqref{eo} to attain
\begin{align}\label{even-1}
|G_0|&=\Big|\mathbbm{1}_{\{y,y^\prime:d_Y(y,y^\prime)<A\}}\frac12\Big(-\frac1{2h}\frac d{dh}\Big)^{\frac n2-1}(\cos h-\cos A)^{\frac n2-1-\omega}|_{h=d_Y(y,y^\prime)}\Big|\nonumber\\
&\lesssim_\epsilon\mathbbm{1}_{\{y,y^\prime:d_Y(y,y^\prime)<A\}}(\cos d_Y(y,y^\prime)-\cos A)^{-\epsilon}.
\end{align}
For $1\leq j\leq n+1$, a similar argument yields
\begin{equation}
|G_j|\lesssim_\epsilon\mathbbm{1}_{\{y,y^\prime:d_Y(y,y^\prime)<A\}}.
\end{equation}
Applying \eqref{cos-1} and \eqref{cos-2} to \eqref{even-1}, we then obtain the desired result for $\frac n2\in\mathbb{N}$.\vspace{0.2cm}

{\bf{\emph{Case b}:}} $\frac n2\notin\mathbb{N}$, that is $n=2k+1$ with $ k\in\mathbb{N}$ and $ k\geq1$. In this case, the difficulty comes from estimating $G_0$ because of the function $(\cos h-\cos A)^{\alpha-w}\in C^{k-1}$ but $\notin C^{k}$. However, by \eqref{eo}, we can acquire
\begin{align*}
G_0=&\mathbbm{1}_{\{y,y^\prime:d_Y(y,y^\prime)<A\}}\\
&\times\int_{d_Y(y,y^\prime)}^A\Big[(-\frac1{2h}\frac{d}{dh})^{k-1}(\cos h-\cos A)^{k-\frac12-\omega}\Big]h\frac{(h^2-d_Y^2(y,y^\prime))^{-\frac32}_+}{\Gamma(-\frac12)}\;dh\\
=&\mathbbm{1}_{\{y,y^\prime:d_Y(y,y^\prime)<A\}}(G_{0,1}+G_{0,2}),
\end{align*}
where
\begin{align*}
G_{0,1}=&\int_{\frac{A+d_{Y}(y,y^\prime)}2}^A\Big[\Big(-\frac1{2h}\frac d{dh}\Big)^{k-1}(\cos h-\cos A)^{k-\frac12-w}\Big]h\frac{(h^2-d^2_Y(y,y^\prime))^{-\frac 32}_+}{\Gamma(-\frac12)}\;dh,\\
G_{0,2}=&\int_{d_{Y}(y,y^\prime)}^{\frac{A+d_{Y}(y,y^\prime)}2}
\Big[\Big(-\frac1{2h}\frac d{dh}\Big)^{k-1}(\cos h-\cos A)^{k-\frac12-w}\Big]h\frac{(h^2-d^2_Y(y,y^\prime))^{-\frac 32}_+}{\Gamma(-\frac12)}\;dh.
\end{align*}\vspace{0.2cm}
{\bf{\emph{Estimation for $|G_{0,1}|$}:}} Since $0\leq d_{Y}(y,y^\prime)<A<\pi$, we have
\begin{align*}
&\Big|\int_{\frac{A+d_{Y}(y,y')}2}^A\Big[(-\frac1{2h}\frac d{dh})^{k-1}(\cos h-\cos A)^{k-\frac12-w}\Big]h\frac{(h^2-d^2_Y(y,y^\prime))^{-\frac 32}_+}{\Gamma(-\frac12)}\;dh\Big|\\
\lesssim_\epsilon&\int_{\frac{A+d_{Y}(y,y^\prime)}2}^A\Big|(\cos h-\cos A)^{\frac12-\epsilon}h(h^2-d^2_Y(y,y^\prime))^{-\frac 32}_+\Big|\;dh\\
\lesssim_\epsilon&(A+d_Y(y,y^\prime))(A^2-d_Y^2(y,y^\prime))^{-\frac32}\int_{\frac{A+d_{Y}(y,y^\prime)}2}^A\Big(2\sin\frac{A-h}2\sin\frac{A+h}2\Big)^{\frac12-\epsilon}\;dh\\
\lesssim_\epsilon&(A+d_Y(y,y^\prime))^{-\frac12}(A-d_Y(y,y^\prime))^{-\frac32}\int_{\frac{A+d_{Y}(y,y^\prime)}2}^A\Big((A-h)\sin\frac{A+h}2\Big)^{\frac12-\epsilon}\;dh.
\end{align*}
Next, we divide $(0,\pi)$ into two intervals $(0,\frac\pi2)\cup[\frac\pi2,\pi)$.

\emph{Case 1}: $0<A<\frac\pi2$. In this case, we obtain
\begin{align*}
&\int_{\frac{A+d_{Y}(y,y^\prime)}2}^A\Big((A-h)\sin\frac{A+h}2\Big)^{\frac12-\epsilon}\;dh\\
\lesssim_\epsilon&\int_{\frac{A+d_{Y}(y,y^\prime)}2}^A\Big((A-h)(A+h)\Big)^{\frac12-\epsilon}\;dh\\
\lesssim_\epsilon&(A+d_Y(y,y^\prime))^{\frac12-\epsilon}\int_{\frac{A+d_{Y}(y,y^\prime)}2}^A(A-h)^{\frac12-\epsilon}\;dh\\
\lesssim_\epsilon&(A+d_Y(y,y^\prime))^{\frac12-\epsilon}(A-d_Y(y,y^\prime))^{\frac32-\epsilon}.
\end{align*}

\emph{Case 2}: $\frac\pi2\leq A<\pi$. In this case, one has
\begin{align*}
&\int_{\frac{A+d_{Y}(y,y^\prime)}2}^A\Big((A-h)\sin\frac{A+h}2\Big)^{\frac12-\epsilon}\;dh\\
\lesssim_\epsilon&\int_{\frac{A+d_{Y}(y,y^\prime)}2}^A\Big((A-h)(\pi-\frac{A+h}2)\Big)^{\frac12-\epsilon}\;dh\\
\lesssim_\epsilon&(\pi-A)^{\frac12-\epsilon}\int_{\frac{A+d_{Y}(y,y^\prime)}2}^A(A-h)^{\frac12-\epsilon}\;dh\\
\lesssim_\epsilon&(\pi-A)^{\frac12-\epsilon}(A-d_Y(y,y^\prime))^{\frac32-\epsilon}.
\end{align*}
Combining with the above inequalities, we can prove \eqref{est:I} holds for $G_{0,1}$.
\vspace{0.2cm}

{\bf{\emph{Estimation for $|G_{0,2}|$}:}} For any $0<h<\rho$, there holds
$$(h^2-d^2_Y(y,y^\prime))^{-\frac 32}_+=(h+d_Y(y,y^\prime))^{-\frac 32}(h-d_Y(y,y^\prime))^{-\frac 32}_+,$$
see \cite[Section 3.2]{GS}. In the following, we continue to take account of the following Riemann-Liouville integral
\begin{align*}
\frac1{\Gamma(-\frac12)}\int_{d_{Y}(y,y^\prime)}^{\frac{A+d_{Y}(y,y^\prime)}2}h(h+d_Y(y,y^\prime))^{-\frac 32}
\Big[\Big(-\frac1{2h}\frac d{dh}\Big)^{k-1}(\cos h-\cos A)^{k-\frac12-w}\Big](h-d_Y(y,y^\prime))^{-\frac 32}_+\;dh,
\end{align*}
for $0<d_{Y}(y,y^\prime)<A<\pi.$ Utilizing the result in \cite[p.10]{R49}, we derive
\begin{align}\label{dec-1}
G_{0,2}
=&\frac1{\Gamma(-\frac12)}f\Big(\frac{A+d_Y(y,y^\prime)}2\Big)\Big(\frac{A+d_Y(y,y^\prime)}2-d_Y(y,y^\prime)\Big)_+^{-\frac 12}\;dh\\ \label{dec-2}
&+\frac1{\Gamma(-\frac12)}\int_{d_Y(y,y^\prime)}^{\frac{A+d_Y(y,y^\prime)}2}f^\prime(h)(h-d_Y(y,y^\prime))_+^{-\frac 12}\;dh,
\end{align}
where we write
\begin{equation}\label{def:fh}
f(h):=h(h+d_Y(y,y^\prime))^{-\frac32}\Big(-\frac1{2h}\frac d{dh}\Big)^{k-1}(\cos h-\cos A)^{k-\frac12-w}.
\end{equation}

\emph{Estimation for \eqref{dec-1}:} For $f\Big(\frac{A+d_{Y}(y,y^\prime)}2\Big)$, a simple computation yields
\begin{align*}
&f\Big(\frac{A+d_{Y}(y,y^\prime)}2\Big)\\
\lesssim_\epsilon&(A+d_{Y}(y,y^\prime))^{-\frac12}\Big(\cos \frac{A+d_{Y}(y,y^\prime)}2-\cos A\Big)^{\frac12-\epsilon}\\
\lesssim_\epsilon&(A+d_{Y}(y,y^\prime))^{-\frac12}\Big(\sin\frac{\frac{A+d_{Y}(y,y^\prime)}2+A}2\sin\frac{A-\frac{A+d_{Y}(y,y^\prime)}2}2\Big)^{\frac12-\epsilon}\\
\lesssim_\epsilon&(A+d_{Y}(y,y^\prime))^{-\frac12}\times
\begin{cases}
(A^2-d_Y^2(y,y^\prime))^{\frac12-\epsilon},&0<A<\frac\pi2,\\
(A-d_Y(y,y^\prime))^{\frac12-\epsilon}(\pi-A)^{\frac12-\epsilon},&\frac\pi2\leq A<\pi,
\end{cases}
\end{align*}
which implies \eqref{dec-1} can be controlled by the right side of \eqref{est:I}.\vspace{0.1cm}

\emph{Estimation for \eqref{dec-2}:} We now focus on estimating the more complicated part
$$\frac{1}{\Gamma(-\frac12)}\int_{d_{Y}(y,y^\prime)}^{\frac{A+d_{Y}(y,y^\prime)}2}f^\prime(h)(h-d^2_Y(y,y^\prime))^{-\frac 12}_+\;dh.$$
For $0<d_Y(y,y^\prime)\leq h\leq\frac{A+d_Y(y,y^\prime)}2$, we first calculate
\begin{align*}
f^\prime(h)=&(h+d_Y(y,y^\prime))^{-\frac32}\Big(-\frac1{2h}\frac d{dh}\Big)^{k-1}(\cos h-\cos A)^{k-\frac12-w}\\
&-\frac32 h(h+d_Y(y,y^\prime))^{-\frac52}\Big(-\frac1{2h}\frac d{dh}\Big)^{k-1}(\cos h-\cos A)^{k-\frac12-w}\\
&+h(h+d_Y(y,y^\prime))^{-\frac32}\frac d{dh}\Big[\left(-\frac1{2h}\frac d{dh}\right)^{k-1}(\cos h-\cos A)^{k-\frac12-w}\Big].
\end{align*}
Note that  for $0<d_Y(y,y^\prime)\leq h\leq\frac{A+d_Y(y,y^\prime)}2<A<\pi$ and $\Re w=\epsilon\in(\frac12,1)$, one has
\begin{align*}
&\Big|\Big(-\frac1{2h}\frac d{dh}\Big)^{k-1}(\cos h-\cos A)^{k-\frac12-w}\Big|\lesssim_{\epsilon}(\cos h-\cos A)^{\frac12-\epsilon},
\end{align*}
and
\begin{align*}
\Big|\frac d{dh}\Big[\left(-\frac1{2h}\frac d{dh}\right)^{k-1}(\cos h-\cos A)^{k-\frac12-w}\Big]\Big|
\lesssim\epsilon\Big[(\cos h-\cos A)^{-\frac12-\epsilon}h+(\cos h-\cos A)^{\frac12-\epsilon}\Big],
\end{align*}
and it follows that
\begin{equation*}
|f^\prime(h)|\lesssim_{\epsilon}\Big[h^{\frac12}(\cos h-\cos A)^{-\frac12-\epsilon}+h^{-\frac 32}(\cos h-\cos A)^{\frac12-\epsilon}\Big].
\end{equation*}
Furthermore, the term in \eqref{dec-2} can be controlled by
\begin{align*}
&\Big|\frac1{\Gamma(-\frac12)}\int_{d_Y(y,y^\prime)}^{\frac{A+d_Y(y,y^\prime)}2}f^\prime(h)(h-d_Y(y,y^\prime))_+^{-\frac 12}\;dh\Big|\\
\lesssim_\epsilon&\int_{d_Y(y,y^\prime)}^{\frac{A+d_Y(y,y^\prime)}2}\Big[h^{\frac12}(\cos h-\cos A)^{-\frac12-\epsilon}\Big](h-d_Y(y,y^\prime))^{-\frac12}_{+}\;dh\\
&+\int_{d_Y(y,y^\prime)}^{\frac{A+d_Y(y,y^\prime)}2}\Big[h^{-\frac 32}(\cos h-\cos A)^{\frac12-\epsilon}\Big](h-d_Y(y,y^\prime))^{-\frac12}_{+}\;dh.
\end{align*}
And then using the identity
$$\cos a-\cos b=2\sin \frac{b-a}2\sin\frac{a+b}2,$$
the term in \eqref{dec-2} can be further dominated by
\begin{align*}
(A+d_Y(y,y^\prime))^{\frac12}(A-d_Y(y,y^\prime))^{-\frac12-\epsilon}\int_{d_Y(y,y^\prime)}^{\frac{A+d_Y(y,y^\prime)}2}\Big(\sin \frac{A+h}2\Big)^{-\frac12-\epsilon}(h-d_Y(y,y^\prime))^{-\frac12}\;dh,
\end{align*}
and
\begin{align*}
(A-d_Y(y,y^\prime))^{\frac12-\epsilon}\int_{d_Y(y,y^\prime)}^{\frac{A+d_Y(y,y^\prime)}2}h^{-\frac32}\Big(\sin \frac{A+h}2\Big)^{\frac12-\epsilon}(h-d_Y(y,y^\prime))^{-\frac12}\;dh.
\end{align*}
To get our desired result, we will now consider two cases: $0<A<\frac\pi2$ and $\frac\pi2\leq A<\pi.$\vspace{0.1cm}

\emph{Case 1:} $0<A<\frac\pi2$. In this case, we first acquire
\begin{align*}
&\int_{d_Y(y,y^\prime)}^{\frac{A+d_Y(y,y^\prime)}2}\Big(\sin \frac{A+h}2\Big)^{-\frac12-\epsilon}(h-d_Y(y,y^\prime))^{-\frac12}\;dh\\
\lesssim_\epsilon&(A+d_Y(y,y^\prime))^{-\frac12-\epsilon}\int_{d_Y(y,y^\prime)}^{\frac{A+d_Y(y,y^\prime)}2}(h-d_Y(y,y^\prime))^{-\frac12}\;dh\\
\lesssim_\epsilon&(A+d_Y(y,y^\prime))^{-\frac12-\epsilon}(A-d_Y(y,y^\prime))^{\frac12}.
\end{align*}
Next, using variable change $s=h-d_Y(y,y^\prime)$ and $s=hd_Y(y,y^\prime)$, we can derive
\begin{align*}
&\int_{d_Y(y,y^\prime)}^{\frac{A+d_Y(y,y^\prime)}2}h^{-\frac32}\Big(\sin \frac{A+h}2\Big)^{\frac12-\epsilon}(h-d_Y(y,y^\prime))^{-\frac12}\;dh\\
\lesssim_\epsilon&(A+d_Y(y,y^\prime))^{\frac12-\epsilon}\int_{d_Y(y,y^\prime)}^{\frac{A+d_Y(y,y^\prime)}2}h^{-\frac32}(h-d_Y(y,y^\prime))^{-\frac12}\;dh\\
\lesssim_\epsilon&(A+d_Y(y,y^\prime))^{\frac12-\epsilon}\int_{0}^{\frac{A-d_Y(y,y^\prime)}2}{(s+d_Y(y,y^\prime)}^{-\frac32}s^{-\frac12}\;ds\\
\lesssim_\epsilon&(A+d_Y(y,y^\prime))^{\frac12-\epsilon}\frac1{d_Y(y,y^\prime)}\int_{0}^{\frac{A-d_Y(y,y^\prime)}{2d_Y(y,y^\prime)}}{(h+1)}^{-\frac32}h^{-\frac12}\;dh\\
\lesssim_\epsilon&(A+d_Y(y,y^\prime))^{\frac12-\epsilon}\frac1{d_Y(y,y^\prime)}.
\end{align*}

\emph{Case 2:} $\frac\pi2\leq A<\pi.$ In this case, a simple computation yields
\begin{align*}
&\int_{d_Y(y,y^\prime)}^{\frac{A+d_Y(y,y^\prime)}2}(\sin \frac{A+h}2)^{-\frac12-\epsilon}(h-d_Y(y,y^\prime))^{-\frac12}\;dh\\
\lesssim_\epsilon&(\pi-A)^{-\frac12-\epsilon}\int_{d_Y(y,y^\prime)}^{\frac{A+d_Y(y,y^\prime)}2}(h-d_Y(y,y^\prime))^{-\frac12}\;dh\\
\lesssim_\epsilon&(\pi-A)^{-\frac12-\epsilon}(A-d_Y(y,y^\prime))^{\frac12}.
\end{align*}
Then, utilizing a similar argument as above, we gain
\begin{align*}
&\int_{d_Y(y,y^\prime)}^{\frac{A+d_Y(y,y^\prime)}2}h^{-\frac32}(\sin \frac{A+h}2)^{\frac12-\epsilon}(h-d_Y(y,y^\prime))^{-\frac12}\;dh\\
\lesssim_\epsilon&(\pi-A)^{\frac12-\epsilon}\int_{d_Y(y,y^\prime)}^{\frac{A+d_Y(y,y^\prime)}2}h^{-\frac32}(h-d_Y(y,y^\prime))^{-\frac12}\;dh\\
\lesssim_\epsilon&(\pi-A)^{\frac12-\epsilon}\frac1{d_Y(y,y^\prime)},
\end{align*}
which indicates \eqref{dec-2} can be bounded by \eqref{est:I}, so the desired result holds for $\frac n2\notin\mathbb{N}.$\vspace{0.1cm}

For $1\leq j\leq n+1$, by \eqref{eo}, there holds
\begin{align*}
G_j=&\mathbbm{1}_{\{y,y^\prime:d_Y(y,y^\prime)<A\}}\mathbbm{1}_{\{r,r^\prime>0:|r-r^\prime|<t<r+r^\prime\}}\\
&\times\int_{d_Y(y,y^\prime)}^A\Big[(-\frac1{2h}\frac{d}{dh})^{k-1}(\cos h-\cos A)^{k-\frac12-\omega}\Big]h\frac{(h^2-d_Y^2(y,y^\prime))^{j-\frac32}_+}{\Gamma(j-\frac12)}\;dh.
\end{align*}
Using the analogous argument as above, it is easy to check that
\begin{align*}
|G_j|\lesssim_\epsilon\mathbbm{1}_{\{y,y^\prime:d_Y(y,y^\prime)<A\}}.
\end{align*}
It is clear that $G_j$ can be bounded by \eqref{est:I}. Therefore, this completes the proof of Lemma \ref{lemT}.

\section{Appendix}

In this section, we will construct the Hadamard parametrix for $\cos(s\nu)$. First, we recall some definitions.
Let $\square:=\pa_{t}^2-\Delta$ and define distributions as follows
$$W_{a}(t,x)=\lim_{\epsilon\to0^+}{\rm Im}(|x|^2-(t+i\epsilon)^2)^a,\ \ a\in-\frac12\mathbb{N}.$$
In fact, we can write
$$W_a(t,x)=C_a\frac{(t^2-|x|^2)_{+}^a}{\Gamma(a+1)}.$$
Based on the definition of $W_a(t,x)$, it is natural to set
$$W_a(t,x)=H(t^2-|x|^2)(t^2-|x|^2)^a,\ a\in \frac12\mathbb{N},$$
and
$$W_0(t,x)=H(t^2-|x|^2)$$
with $H$ being the Heaviside function. Next, we choose $c_\nu$ and suppose
$$E_\nu=\alpha_\nu W_{-(n-1)/2+\nu},\ \nu=1,2,3,...,$$
such that $\square E_\nu=\nu E_{\nu-1}$. In particular, we have
$$E_{\nu}(t,x)=C_\nu\lim_{\epsilon \to 0^+}(2\pi)^{-n-1}\iint e^{ix\cdot\xi+it\tau}(|\xi|^2-(\tau-i\epsilon)^2)^{-\nu-1}d\xi d\tau$$
in the sense of distribution by the Paley--Weiner theorem (see \cite{Hor,Rudin}). Then we can get the following proposition, see \cite{Sogge} for more details.
\begin{proposition}\label{funcdamential}
Let $E_{\nu}, \nu=0,1,2,...$ be the distributions defined as above. Then
$$supp\ E_v\subset\{(t,x)\in [0,\infty)\times\R^n:|x|\leq t\}$$
$$\square E_0=\delta_{0,0},$$
and for $\nu=1,2,3,...$
$$\square E_\nu=\nu E_{\nu-1}, -2\nabla E_\nu=xE_{\nu-1}, 2\pa_t E_\nu=tE_{\nu-1}.$$
Furthermore, we have
$$E_\nu(0^+)=\lim_{t\to0^+}E(t,x)=0, \nu=0,1,2,...$$
and $$\pa_t^kE_\nu(0^+)=0\ for\ k\leq 2\nu.$$
\end{proposition}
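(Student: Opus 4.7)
The plan is to read off every assertion from the two representations of $E_\nu$ already given in the excerpt: the oscillatory integral
\[
E_\nu(t,x) = C_\nu\lim_{\epsilon\to 0^+}(2\pi)^{-n-1}\iint e^{ix\cdot\xi+it\tau}\bigl(|\xi|^2-(\tau-i\epsilon)^2\bigr)^{-\nu-1}\,d\xi\,d\tau,
\]
and the Riesz-type distribution $E_\nu=\alpha_\nu W_{-(n-1)/2+\nu}$. The first step is the support property. For fixed $\xi$, the zeros of $|\xi|^2-(\tau-i\epsilon)^2$ in $\tau$ are $\pm|\xi|+i\epsilon$, both in the upper half plane, so the integrand is holomorphic in the closed lower half plane; closing the contour downward when $t<0$ gives $E_\nu\equiv 0$ there. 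A Paley--Wiener--Schwartz estimate on the $\xi$-integral then confines the support to the closed forward light cone $\{|x|\le t\}$, in agreement with the $W_{-(n-1)/2+\nu}$ representation whose $(t^2-|x|^2)_+$ factor is supported there.

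Next I would verify the wave equation identities. Applying $\square=\partial_t^2-\Delta$ inside the oscillatory integral multiplies the symbol by $|\xi|^2-\tau^2$. The algebraic identity
\[
|\xi|^2-\tau^2 = \bigl(|\xi|^2-(\tau-i\epsilon)^2\bigr) - 2i\epsilon\tau + \epsilon^2
\]
shows that, after taking $\epsilon\to 0^+$, the effect of $\square$ is to raise the exponent by one, yielding $\square E_\nu = (C_\nu/C_{\nu-1})\,E_{\nu-1}$ for $\nu\ge 1$, while for $\nu=0$ the quotient becomes $1$ and the double integral collapses to $C_0\,\delta_{0,0}$ by Fourier inversion. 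With the normalizations $C_\nu$ (equivalently $\alpha_\nu$) chosen so that $C_\nu/C_{\nu-1}=\nu$ and $C_0=1$, we obtain $\square E_0=\delta_{0,0}$ and $\square E_\nu=\nu E_{\nu-1}$. The gradient and time-derivative identities come directly from the Riesz representation: on the $W_a$ side one has
\[
\nabla_x W_a = -2ax\,W_{a-1},\qquad \partial_t W_a = 2at\,W_{a-1},
\]
so with $a=-(n-1)/2+\nu$ and the same constants $\alpha_\nu$ one recovers $-2\nabla E_\nu=xE_{\nu-1}$ and $2\partial_t E_\nu=tE_{\nu-1}$; the consistency check is that these relations combined with the forced recursion on $\alpha_\nu$ are compatible with the $\square$-identity above (indeed $\square=-2\nabla\cdot(x/t)\,\partial_t + \text{lower}$ on homogeneous functions of $t^2-|x|^2$).

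Finally, the initial-value assertions follow from the support property combined with the order of vanishing of $W_a$ at $t=0$. Since $E_\nu$ is supported in $\{|x|\le t\}$, for any test function $\varphi(x)$ the pairing $\langle E_\nu(t,\cdot),\varphi\rangle$ is an integral over $|x|\le t$ of an expression involving $(t^2-|x|^2)^{-(n-1)/2+\nu}$; introducing polar coordinates and rescaling $|x|=ts$ shows that this pairing vanishes like a positive power of $t$, in fact like $t^{2\nu+1}$, as $t\to 0^+$. Consequently $\partial_t^k E_\nu(0^+)=0$ for every $k\le 2\nu$, and in particular $E_\nu(0^+)=0$.

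The main obstacle is the bookkeeping of the normalizing constants: one must verify that a single choice of $C_\nu$ (and hence $\alpha_\nu$) makes the Fourier-side identity $\square E_\nu=\nu E_{\nu-1}$, the Riesz-side identities $-2\nabla E_\nu=xE_{\nu-1}$ and $2\partial_t E_\nu=tE_{\nu-1}$, and the distributional normalization $\square E_0=\delta_{0,0}$ all hold simultaneously. Once the two representations are matched via $\alpha_\nu\,\Gamma(-(n-1)/2+\nu+1)=C_\nu\cdot(\text{explicit Fourier constant})$, the recursions $C_\nu=\nu C_{\nu-1}$ and $4(-(n-1)/2+\nu)\alpha_\nu=\alpha_{\nu-1}$ can be checked against each other and against the standard $\square W_a=(\text{const})\,W_{a-1}$ identity, after which all items of the proposition follow.
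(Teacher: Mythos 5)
Your proposal is correct and follows essentially the same route as the source the paper relies on: the paper gives no argument of its own for this proposition, deferring to Sogge's Hangzhou lectures, and your sketch (contour deformation in $\tau$ plus a Paley--Wiener argument for the support, symbol manipulation with the $i\epsilon$-regularization for $\square E_\nu=\nu E_{\nu-1}$ and $\square E_0=\delta_{0,0}$, the Riesz/$W_a$ representation for the $\nabla$ and $\partial_t$ identities, and homogeneity giving the $t^{2\nu+1}$ vanishing rate at $t=0^+$) is precisely that standard argument. Only cosmetic points remain: the algebraic identity should read $|\xi|^2-\tau^2=(|\xi|^2-(\tau-i\epsilon)^2)-2i\epsilon\tau-\epsilon^2$ (equivalently, apply $(\partial_t+\epsilon)^2-\Delta$, which raises the exponent exactly, the discarded terms being $\epsilon$ times derivatives of the convergent family $E_\nu^\epsilon$ and hence negligible distributionally), and the constants in $\nabla_x W_a$, $\partial_t W_a$ must account for the $\Gamma(a+1)$ normalization, as you yourself flag in the bookkeeping remark.
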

\begin{remark}
  Since $$E_0=c_0(2\pi)^{-n}\int_{\R^n}e^{ix\cdot\xi}\frac{\sin t|\xi|}{|\xi|}d\xi,$$
  using the property of $2\pa_t E_\nu=tE_{\nu-1}$ and by an induction, one can shows that $E_\nu(\nu\geq1)$ can be written by
 \begin{align*}
   E_\nu(t,x)=\sum_{j+k=\nu}H(t)t^j(2\pi)^{-n}&\Big(A_{j,k}\int_{\R^n}e^{ix\cdot\xi}\sin t|\xi||\xi|^{-\nu-1-k}d\xi\\
   &+B_{j,k}\int_{\R^n}e^{ix\cdot\xi}\cos t|\xi||\xi|^{-\nu-1-k}d\xi\Big).
 \end{align*}
\end{remark}

Next, we record some results about the geodesics and normal coordinates.
Let $M$ be smooth Riemannian manifold and $g$ denotes the Riemannian metric, i.e.,$(M, g)$.
\begin{theorem}[\cite{Sogge}]\label{g-coordination}
  If $\Omega$ is a fixed relatively compact subset of $M$, then for $z\in \Omega$ there are coordinates $\kappa_{z}(y)=x$ vanishing at $z$ satisfying $\kappa_z'(z)=I_n$ and
  \begin{align}\label{coordination}
\sum_jg_{ij}(x)x_j=\sum_jg_{ij}(0)x_j,\ \ i=1,...,n,
  \end{align}
  and
  $$(d_g(0,x))^2=\sum_{i,j}g_{ij}(0)x_ix_j.$$
  One can chooses $\delta>0$ such that for every $z\in Y$, $\kappa_z$ is a diffeomorphism from all points $y\in M$ with $d_g(z,y)<\delta$ to points $x$ satisfying $\sum_{i,j}g_{ij}(0)x_ix_j<\delta^2$.
\end{theorem}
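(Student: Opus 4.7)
The plan is to build normal coordinates from the exponential map and extract the two identities from a Jacobi field computation.

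First, for each $z \in \Omega$, I would consider the exponential map $\exp_z \colon T_z M \to M$. Its differential at $0$, under the canonical identification $T_0(T_z M) \cong T_z M$, is the identity, so the inverse function theorem yields a smooth inverse on some neighborhood of $z$. Composing with a fixed linear isomorphism $T_z M \cong \R^n$ produces coordinates $\kappa_z$ with $\kappa_z(z) = 0$ and $\kappa_z'(z) = I_n$. Smoothness of $\exp$ in the basepoint, together with the relative compactness of $\Omega$, then produces a single $\delta > 0$ on which $\kappa_z$ is a diffeomorphism for every $z$, via a standard continuity/lower semicontinuity argument on the injectivity radius over $\overline{\Omega}$.

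Second, I would verify the distance formula. In the chart, the segment $t \mapsto tx$ pulls back to the radial geodesic $\exp_z(tx)$, whose speed is the constant $(g_{ij}(0) x^i x^j)^{1/2}$ by conservation of energy along a geodesic. The Gauss-type identity of the next step implies that this radial geodesic is length-minimizing inside the normal neighborhood, whence $d_g(z, \kappa_z^{-1}(x))^2 = g_{ij}(0) x^i x^j$.

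Third, for the heart of the theorem, I would establish $g_{ij}(x) x^j = g_{ij}(0) x^j$ via Jacobi fields. Fix $u \in \R^n$ and set $\phi(t,s) = \exp_z(t(x + su))$, so that $\gamma(t) := \phi(t, 0) = \exp_z(tx)$ is a geodesic and $J(t) := \partial_s \phi(t, 0)$ is a Jacobi field along it with $J(0) = 0$. Since $\dot\gamma$ is a geodesic and the curvature tensor satisfies $\langle R(J, \dot\gamma)\dot\gamma, \dot\gamma\rangle = 0$, the Jacobi equation forces $\langle J, \dot\gamma\rangle_g$ to be affine in $t$. Reading $J$ and $\dot\gamma$ in the coordinates $\kappa_z$ as $tu$ and $x$ respectively, I would compute the slope at $t=0$ to be $g_{ij}(0) u^i x^j$ using $\nabla_{\partial_t}\partial_s\phi = \nabla_{\partial_s}\partial_t\phi$ and the fact that $\partial_t\phi(0,s) = x + su$ lives in $T_z M$. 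Evaluating at $t = 1$ then gives $g_{ij}(x) u^i x^j = g_{ij}(0) u^i x^j$, and since $u$ is arbitrary the identity follows.

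The main obstacle I anticipate is the third step: converting the symmetric geodesic relation $\Gamma^k_{ij}(tx) x^i x^j = 0$ into the one-index identity $g_{ij}(x) x^j = g_{ij}(0) x^j$ requires breaking symmetry in $(i,j)$. The Jacobi-field route above is the cleanest approach I know, but it hinges on the careful distinction between the intrinsic covariant derivative along the variation and its coordinate description at both endpoints, and on the nontrivial fact that $\langle J, \dot\gamma\rangle''$ vanishes along the geodesic. By contrast, the uniformity of $\delta$ over $\Omega$ is a routine compactness argument, and the distance formula in step two is essentially a corollary of step three combined with the Gauss Lemma.
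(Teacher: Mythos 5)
The paper does not prove this statement at all: it is quoted verbatim from Sogge's book \cite{Sogge} and used as a black box, so there is no in-paper argument to compare against. Your proposal is essentially a correct proof. Constructing $\kappa_z$ from $\exp_z$, getting a uniform $\delta$ by continuity of the injectivity radius over the relatively compact $\overline{\Omega}$, and deducing the distance formula from the minimizing property of radial geodesics inside the normal ball are all sound, and there is no circularity since your step three does not use step two. Your Jacobi-field argument for the key identity is the classical proof of the Gauss lemma in disguise: with $J(t)=\partial_s\exp_z\big(t(x+su)\big)\big|_{s=0}$ one has $\langle J,\dot\gamma\rangle''=-\langle R(J,\dot\gamma)\dot\gamma,\dot\gamma\rangle=0$, $J(0)=0$, $J'(0)=u$, and reading $J(t)=tu$, $\dot\gamma=x$ in the chart gives $g_{ij}(tx)\,t\,u^ix^j=t\,g_{ij}(0)u^ix^j$, hence the one-index identity at $t=1$. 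For comparison, the proof in Sogge's book reaches \eqref{coordination} by a more elementary computation that avoids curvature altogether: since the rays $t\mapsto tx$ are geodesics, the geodesic equation in its first-order form gives $\frac{d}{dt}\big(g_{ij}(tx)x^j\big)=\tfrac12(\partial_ig_{jk})(tx)x^jx^k$, while differentiating the conserved energy $g_{jk}(tx)x^jx^k=g_{jk}(0)x^jx^k$ in $x^i$ gives $t(\partial_ig_{jk})(tx)x^jx^k+2g_{ij}(tx)x^j=2g_{ij}(0)x^j$; writing $f_i(t)=g_{ij}(tx)x^j$ these combine into $\big(t f_i(t)\big)'=f_i(0)$, so $f_i(1)=f_i(0)$. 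That route only needs the geodesic ODE and homogeneity, whereas yours buys a more geometric picture (and the Gauss lemma explicitly, which you then reuse for the distance formula) at the price of invoking Jacobi fields and curvature symmetries. Either argument fully justifies the statement as used in the paper.
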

\begin{remark}
  By a linear change of variables, one can assume $(g_{ij})(0)=I_n$.
\end{remark}

We now construct Hadamard parametrix for constant coefficient differential operators. Let $T:\R^n\to\R^n$ be a linear bijection such that $(g_{ij})=T^tT$. Thus the operator $\Delta=\pa_{y_1}^2+\cdot\cdot\cdot+\pa_{y_n}^2$ can be written via $y=Tx$
$$\sum_{j,k=1}^n\frac{\pa}{\pa x_j}g^{jk}\frac{\pa}{\pa x_k}.$$
Therefore, it follows that
\begin{align}\label{Cons}
  \left(\pa_t^2-\sum_{j,k=1}^n\frac{\pa}{\pa x_j}g^{jk}\frac{\pa}{\pa x_k}\right)E_0(t,|x|_g)=|g|^{-\frac12}\delta_{0,0}(t,x),
\end{align}
with $|x|_g^2=\sum_{i,k}g_{jk}x_jx_k$.

Next, we will employ \eqref{coordination} to convert variable coefficients to constant coefficient differential operators, that is,
\begin{align}\label{zhuan-huan}\nonumber
  &\left(\pa_t^2-\sum_{j,k=1}^n\frac{\pa}{\pa x_j}g^{jk}(x)\frac{\pa}{\pa x_k}\right)E_0(t,d_g(0,x))\\\nonumber
  =&\left(\pa_t^2-\sum_{j,k=1}^n\frac{\pa}{\pa x_j}g^{jk}\frac{\pa}{\pa x_k}\right)E_0(t,d_g(0,x))\\
  =&|g(0)|^{-\frac12}\delta_{0,0}(t,x).
\end{align}

\begin{lemma}\label{V-C}
  Let $F\in C^\infty(\R)$. Thus for  $x$ near $0$, we have
  \begin{align*}
  \Big(\sum_{j,k}\pa_jg^{jk}(0)\pa_k\Big)F\Big(\sum_{l,m}g_{lm}(0)x_lx_m\Big)=\Big(\sum_{j,k}\pa_jg^{jk}(x)\pa_k\Big)F\Big(\sum_{l,m}g_{lm}(0)x_lx_m\Big)
\end{align*}
\end{lemma}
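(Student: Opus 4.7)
The plan is to compute both sides explicitly and reduce them to the same expression by invoking the geodesic normal coordinate identity of Theorem \ref{g-coordination}, namely \eqref{coordination}: $\sum_j g_{ij}(x) x_j = \sum_j g_{ij}(0) x_j$ for $x$ near $0$.

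First I would set $\rho(x) := \sum_{l,m} g_{lm}(0)x_l x_m$ and observe $\pa_k \rho(x) = 2\sum_m g_{km}(0) x_m$ by symmetry of the metric. The chain rule then gives $\pa_k F(\rho(x)) = 2 F'(\rho(x)) \sum_m g_{km}(0) x_m$. For the right hand side, the Leibniz rule yields
\[\sum_{j,k}\pa_j\bigl[g^{jk}(x)\pa_k F(\rho)\bigr] = 2\sum_j\pa_j\Bigl[F'(\rho)\sum_{k,m}g^{jk}(x) g_{km}(0) x_m\Bigr].\]
The crucial step is to replace $g_{km}(0) x_m$ by $g_{km}(x) x_m$ inside this sum by \eqref{coordination}, and then use $\sum_k g^{jk}(x) g_{km}(x) = \delta^j_m$ (since $g^{-1}(x) g(x) = I_n$), producing the cancellation
\[\sum_{k,m} g^{jk}(x) g_{km}(0) x_m = \sum_{k,m} g^{jk}(x) g_{km}(x) x_m = x_j.\]
Hence the right hand side collapses to $2\sum_j \pa_j[F'(\rho) x_j] = 4\rho F''(\rho) + 2n F'(\rho)$ after one more application of the chain rule and the identity $\sum_j x_j \pa_j \rho = 2\rho$.

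For the left hand side, since $g^{jk}(0)$ is a constant matrix the operator reduces to $\sum_{j,k} g^{jk}(0) \pa_j \pa_k F(\rho)$. A routine computation via the chain rule, together with $\sum_j g^{jk}(0) g_{jl}(0) = \delta^k_l$ and $\sum_{j,k} g^{jk}(0) g_{kj}(0) = n$, also yields $4\rho F''(\rho) + 2n F'(\rho)$, matching the right hand side.

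The only nontrivial point is the identification $\sum_{k,m} g^{jk}(x) g_{km}(0) x_m = x_j$, which encapsulates why geodesic normal coordinates are the right choice: they allow the radial contraction of the variable-coefficient metric $g(x)$ against the position vector to be swapped for the constant-coefficient $g(0)$, so that the $x$-dependence in $g^{jk}(x)$ is annihilated by the Kronecker $\delta$. Everything else is bookkeeping with the chain rule, and both sides indeed depend on $F(\rho)$ only through the radial variable $\rho$.
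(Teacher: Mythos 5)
Your proof is correct and follows essentially the same route as the paper: the heart of both arguments is the normal-coordinate identity \eqref{coordination}, used to get $\sum_{k,m}g^{jk}(x)g_{km}(0)x_m=x_j$, which is exactly the paper's ``strong identity'' $\sum_k g^{jk}(x)\pa_k F(\rho)=2x_jF'(\rho)=\sum_k g^{jk}(0)\pa_k F(\rho)$. Your additional evaluation of both sides to $4\rho F''(\rho)+2nF'(\rho)$ is redundant (once the inner first-order expressions agree, applying $\sum_j\pa_j$ finishes it), but harmless.
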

\begin{proof}
  The lemma will follows the strong identity, i.e.,
  \begin{align}\label{S-VC}\nonumber
(\sum_{k=1}^ng^{jk}(0)\pa_k)F(\sum_{l,m}g_{lm}(0)x_lx_m)=&(\sum_{k=1}^ng^{jk}(0)\pa_k)F(\sum_{l,m}g_{lm}(0)x_lx_m)\\
=&2x_jF'(\sum_{l,m}g_{lm}(0)x_lx_m).
\end{align}
By the chain rule, we can get
\begin{align}
  &\Big(\sum_{k=1}^ng^{jk}(0)\pa_k\Big)F\Big (\sum_{l,m}g_{lm}(0)x_lx_m\Big)\\
  =&F'\Big(\sum_{l,m}g_{lm}(0)x_lx_m\Big )\times2\sum_{k,l}g^{jk}(0)g_kl(0)x_l\\
  =&2x_jF'\Big(\sum_{l,m}g_{lm}(0)x_lx_m\Big),
\end{align}
hence the strong identity \eqref{S-VC} holds. Therefore, the lemma follows at once.
\end{proof}

Since $\square E_\nu=\nu E_{\nu-1}$ for $\nu=1,2,...$ and Lemma \ref{V-C}, we can acquire
\begin{align}\label{Identity-1}
  \Big(\pa_t^2-\sum_{j,k=1}^{n}\frac{\pa}{\pa x_j}g^{jk}(x)\frac{\pa}{\pa x_k}\Big)E_\nu(t,d_g(0,x))=\nu E_{\nu-1}(t,d_g(0,x)),\ \ \nu=1,2,...
\end{align}
Fix $t$ and let $F_\nu(|x|^2)=E_\nu(t,|x|)$, we can gain
$$\Big(\sum_{k}g^{jk}(x)\pa_k\Big)E_\nu(t,d_g(0,x))=2x_j(\pa_r F_\nu)(d_g^2(0,x)),$$
Using the fact $\pa_r E(t,r)=2rF'(r^2)$ and Proposition \ref{funcdamential}, one has
$$4\pa_rF(r^2)=-E_{\nu-1},$$
thus
\begin{align}\label{Identity-2}
-2\Big(\sum_{k}g^{jk}(x)\pa_k\Big)E_\nu(t,d_g(0,x))=x_jE_{\nu-1}(t,d_g(0,x)), \ \ \nu=1,2,...
\end{align}
and
\begin{align}\label{Identity-3}
  \Big(\sum_{k}g^{jk}(x)\pa_k\Big)E_0(t,d_g(0,x))=2x_jF_0'(d_g^2(0,x)).
\end{align}
Using the identities \eqref{Identity-2} and \eqref{Identity-3}, it is equivalent to
$$\pa_kE_v(t,d_g(0,x))=-\frac12\Big(\sum_{j}g_{jk}(0)x_j\Big)E_{\nu-1}(t,d_g(0,x)),$$
and
$$\pa_kE_0(t,d_g(0,x))=2\Big(\sum_{j}g_{jk}(0)x_j\Big)F_0'(t,d_g(0,x)).$$

Now, we construct  the Hadamard parametrix for wave kernel following the similar argument in \cite{Sogge}.
\begin{theorem}\label{W-F}
  Let $\Omega$ be a fixed open and relatively compact subset of $M$. Then there is a constant $\delta>0$ and functions $\alpha_\nu\in C^\infty(M\times \Omega)$ so that when $t<\delta$ we have
\begin{align}\nonumber
&(\pa_t^2+Q)\sum_{\nu=0}^N\alpha_\nu(x,y)E_\nu(t,d_g(x,y))\\
=&|g(y)|^{-\frac12}\delta_{0,y}(t,x)+(Q\alpha_N(x,y))E_N(t,d_g(x,y)),\quad (x,y)\in M\times \Omega
\end{align}
with
$$Q:=(i\nabla_{g}+{\bf \tilde{A}})^2+\frac{(n-2)^2}4,$$
and $\alpha_0(y,y)=1.$
\end{theorem}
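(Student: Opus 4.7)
The plan is the classical Hadamard parametrix construction, adapted to accommodate the magnetic and zeroth-order terms in $Q$. Split $Q = -\Delta_g + L$, where $L := 2i\tilde{\mathbf{A}}\cdot\nabla_g + i(\operatorname{div}_g\tilde{\mathbf{A}}) + |\tilde{\mathbf{A}}|^2 + (n-2)^2/4$ has order at most one, so that the principal part of $\partial_t^2 + Q$ is the geometric wave operator $\square_g := \partial_t^2 - \Delta_g$. Take the Hadamard ansatz
\begin{equation*}
u_N(t,x,y) := \sum_{\nu=0}^N \alpha_\nu(x,y)\, E_\nu(t, d_g(x,y)),
\end{equation*}
and determine the smooth amplitudes $\alpha_\nu(x,y)$ recursively so that $(\partial_t^2 + Q) u_N - |g(y)|^{-1/2}\delta_{0,y}$ has its remaining singularity of order no worse than $E_N$.

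For each fixed $y \in \Omega$, I would work in geodesic normal coordinates centered at $y$ as in Theorem \ref{g-coordination}, in which $d_g(x,y)^2 = \sum g_{ij}(0)x_ix_j$. By the constant-coefficient identity \eqref{zhuan-huan} together with Lemma \ref{V-C},
\begin{equation*}
\square_g[\alpha_0 E_0] = |g(y)|^{-1/2}\alpha_0(y,y)\,\delta_{0,y} - 2 \nabla_g \alpha_0 \cdot \nabla_g E_0 - (\Delta_g \alpha_0) E_0,
\end{equation*}
and for $\nu \geq 1$, using \eqref{Identity-1}--\eqref{Identity-3} together with the Leibniz rule,
\begin{equation*}
\square_g[\alpha_\nu E_\nu] = \bigl(\nu \alpha_\nu + r\partial_r \alpha_\nu\bigr) E_{\nu-1} - (\Delta_g \alpha_\nu) E_\nu,
\end{equation*}
where $r\partial_r = \sum_j x_j\partial_j$ is the geodesic radial derivative: indeed, by \eqref{Identity-2} the cross term $-2\nabla_g\alpha_\nu\cdot\nabla_g E_\nu$ collapses to $(r\partial_r\alpha_\nu)E_{\nu-1}$. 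The operator $L$ contributes extra terms of the same or lower singularity, computable by the same identities. Imposing $\alpha_0(y,y) = 1$ to produce the correct delta coefficient, and collecting the coefficients of $E_{\nu-1}$, I obtain the transport system schematically of the form
\begin{equation*}
\bigl(r\partial_r + \nu + \tfrac{1}{2} r\partial_r \log|g|^{1/2} + ir\tilde{\mathbf{A}}\cdot \partial_r\bigr)\alpha_\nu = -Q\alpha_{\nu-1}, \qquad \nu = 0,1,\ldots,N,
\end{equation*}
with $\alpha_{-1}\equiv 0$, the $\log|g|^{1/2}$ coefficient encoding the discrepancy between $\Delta_g$ and the flat Laplacian in normal coordinates.

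Each equation is a first-order linear ODE along the radial geodesics emanating from $y$, uniquely solvable on the ball $\{d_g(x,y) < \delta\}$ for any $\delta$ smaller than the injectivity radius of $\Omega$. The solution $\alpha_0$ is a magnetic phase factor times a power of the Van Vleck--Morette density, and $\alpha_\nu$ for $\nu\geq 1$ follows by Duhamel's formula with source $-Q\alpha_{\nu-1}$; smooth dependence on $(x,y)$ is inherited from the smooth dependence of ODEs on parameters, giving $\alpha_\nu \in C^\infty(M\times\Omega)$ after a smooth cutoff away from the diagonal. Summing the resulting identities over $\nu$ telescopes the $E_{\nu-1}$ contributions for $\nu = 1,\ldots, N$, leaving precisely $|g(y)|^{-1/2}\delta_{0,y}$ and the residual $(Q\alpha_N)E_N$, as claimed.

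The main obstacle will be the careful bookkeeping of the magnetic first-order term $L$ against the distributional identities \eqref{Identity-1}--\eqref{Identity-3}: one must verify that $L[\alpha_\nu E_\nu]$ produces no singularity worse than $E_{\nu-1}$, so that the recursion closes with the same structure as in the non-magnetic case. Since $\nabla_g E_\nu$ is proportional to $x\, E_{\nu-1}$ by \eqref{Identity-2} and $\tilde{\mathbf{A}}\cdot x$ is smooth and vanishes at the base point of the normal chart, this magnetic contribution merely enters as an additional smooth coefficient in the transport operator; thus, standard ODE theory closes the argument, exactly parallel to the unperturbed Hadamard construction.
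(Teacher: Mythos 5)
Your proposal follows essentially the same route as the paper's proof: the Hadamard ansatz in geodesic normal coordinates, the identities \eqref{Identity-1}--\eqref{Identity-3} to reduce everything to first-order radial transport equations in which the magnetic first-order part of $Q$ is absorbed into the coefficient $\rho$ (the paper's $b_k=a_k+2iA_k$), with $\alpha_0=|g(x)|^{-1/4}|g(0)|^{1/4}$ times a magnetic phase and $\alpha_\nu$ ($\nu\ge1$) obtained by the Duhamel-type integral $\alpha_\nu=\alpha_0\int_0^1 s^{\nu-1}(-Q\alpha_{\nu-1})(sx)/\alpha_0(sx)\,ds$. Up to the schematic (and slightly loose) writing of the magnetic term as a radial derivative rather than multiplication by $i\tilde{\mathbf{A}}\cdot x$, this is the paper's argument, and it is correct.
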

\begin{proof}
 Note first that
\begin{align*}
 Q=&-\Delta_g+2i\sum_{k=1}^nA_k\pa_k+i{\rm div}{\bf \tilde{A}}+|{\bf \tilde{A}}|_g^2+\frac{(n-2)^2}4\\
  =&-\sum_{j,k=1}^n\pa_jg^{jk}(x)\pa_k+\sum_{k=1}^na_k\pa_k\\
  &+2i\sum_{k=1}^nA_k\pa_k+i{\rm div}{\bf \tilde{A}}+|{\bf \tilde{A}}|_g^2+\frac{(n-2)^2}4\\
  =&-\sum_{j,k=1}^n\pa_jg^{jk}(x)\pa_k+\sum_{k=1}^nb_k\pa_k+B,
\end{align*}
where
$$a_k=-\sum_{j=1}^ng^{jk}\pa_j(|g(x)|^{\frac12}),\ b_k=a_k+2iA_k,$$
and
$$B=i{\rm div}{\bf \tilde{A}}+|{\bf \tilde{A}}|_g^2+\frac{(n-2)^2}4.$$

Let $\alpha_0\in C^\infty$, thus we can get
\begin{align}
  (\pa_t^2+Q)\alpha_0E_0(t,d_g(0,x))=&\alpha_0(0)|g|^{-\frac12}+(Q\alpha_0)E_0(t,d_g(0,x))\\
  &+2\sum_{j,k=1}^ng_{jk}(0)b_k(x)x_j\alpha_0F_0'(d_g^2(0,x))\\
  &-4\sum_{j=1}^nx_j\pa_j\alpha_0F_0'(d_g^2(0,x))\\
  =&\alpha_0(0)|g|^{-\frac12}+(Q\alpha_0)E_0(t,d_g(0,x))\\
  &+2(\rho\alpha_0-2\langle x,\nabla_x\alpha_0\rangle)F_0'(d_g^2(0,x)),
\end{align}
if we set
$$\rho(x)=\sum_{j,k=1}^ng_{jk}(0)b_kx_j=\sum_{j,k=1}^ng_{jk}(x)b_kx_j.$$
Similarly, for $\nu=1,2,3,...$, one has
\begin{align}
(\pa_t^2+Q)\alpha_0E_\nu(t,d_g(0,x))=&\nu\alpha_\nu E_{\nu-1}(t,d_g(0,x))+(Q\alpha_0)E_\nu(t,d_g(0,x))\\
&-\frac12((\rho\alpha_\nu-2\langle x,\nabla_x\alpha_\nu\rangle))E_{\nu-1}(t,d_g(0,x)).
\end{align}

Next, we will choose $\alpha_0(x)$ so that it solves the transport equation
\begin{align}\label{Tran-1}
\rho \alpha_0=2\langle x,\nabla_x\alpha_0\rangle,\ \ \alpha_0(0)=1
\end{align}
and for $\nu\geq1$
\begin{align}\label{Tran-2}
2\nu\alpha_\nu-\rho\alpha_\nu+2\langle x,\nabla_x\alpha_\nu\rangle+2Q\alpha_{\nu-1}=0.
\end{align}
Using polar coordinations, $x=r\omega,$ thus \eqref{Tran-1} can be written
$$\pa_r\alpha_0=\frac{\rho\alpha_0}{2r}.$$
Since $\rho(0)=0$, we have
$$\alpha_0(x)=\exp\Big(\int_0^1\frac12\rho(sx)\frac{ds}s\Big).$$
Let $\Theta(x):=|g(x)|^{\frac12}$, hence we have
${\rm Re}\rho(x)=-r\Theta^{-1}\pa_r(\Theta)$, and then we can obtain
$$\alpha_0(x)=|g(x)|^{-\frac14}|g(0)|^{\frac14}\exp i\Big(\int_0^1\frac12{\rm Im}\rho(sx)\frac{ds}s\Big).$$

Let $u=\alpha_\nu/\alpha_0$ and $f(x)=-Q\alpha_{\nu-1}/\alpha_0$, then we can rewrite \eqref{Tran-2} as
$$\nu u+r\pa_r u=f,$$
and since $\pa_r(r^\nu u)=r^{\nu-1}f.$ Consequently, we have
$$r^{\nu}u(r\omega)=r^\nu\int_0^1s^{\nu-1}f(sx)ds.$$
In other words
$$\alpha_\nu =\alpha_0\int_0^1t^{\nu-1}\frac{-Q\alpha_{\nu-1}(tx)}{\alpha_0(tx)}dt,\ \nu=1,2,3,...$$
By induction, the $\alpha_\nu$ given by this formula are smooth and unique.
\end{proof}

Using the energy conservation and Sobolev embedding, we can get the follow kernel function expression for wave equation.
\begin{lemma}[Wave kernel \cite{Sogge}]\label{Wave}
Let $d_g(\theta,\omega)<\varepsilon_0$ with $\varepsilon_0$ be the injective radius of $Y$ and suppose $\varepsilon_0>\pi$. Let $t>0$, then for $N>n+3$, we have
\begin{align}
  \cos(t\nu)(\theta,\omega)=K_N(t,\theta,\omega)+R_N(t,\theta,\omega),
\end{align}
where $R_N(t,\theta,\omega)\in C^{N-n-3}([0,\pi]\times Y\times Y)$ and
\begin{align}
K_N(t,\theta,\omega)&=\sum_{k=0}^N\alpha_k(\theta,\omega)\pa_tE_k(t,d_h(\theta,\omega))
=\sum_{k=0}^N\alpha_k(\theta,\omega)t
\frac{(t^2-d_h^2(\theta,\omega))_+^{k-\frac{n}2}}{\Gamma(k-n/2+1)}
\end{align}
with ${\rm Re}\alpha_0(\theta,\omega)=|h(\theta)|^{-\frac14}|h(\omega)|^{\frac14}$.
\end{lemma}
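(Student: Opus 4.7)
The plan is to deduce this lemma from Theorem \ref{W-F} by identifying the operator there with $Q = (i\nabla_h + \mathbf{A})^2 + (n-2)^2/4 = \nu^2$, so that $\cos(t\nu)$ is the cosine propagator for $\partial_t^2 + Q$ on the closed manifold $Y$. First I would introduce the approximate Green's function for the sine propagator,
\begin{equation*}
\tilde K_N(t,\theta,\omega) := \sum_{k=0}^N \alpha_k(\theta,\omega)\,E_k\!\bigl(t,d_h(\theta,\omega)\bigr),
\end{equation*}
where the coefficients $\alpha_k$ solve the transport equations \eqref{Tran-1}--\eqref{Tran-2}. Theorem \ref{W-F} gives
\begin{equation*}
(\partial_t^2 + Q_\theta)\tilde K_N(t,\theta,\omega)
= |h(\omega)|^{-1/2}\delta_{0,\omega}(t,\theta) + S_N(t,\theta,\omega),
\end{equation*}
with remainder $S_N := (Q\alpha_N)(\theta,\omega)\,E_N(t,d_h(\theta,\omega))$. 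Because $E_N \propto (t^2-d_h^2)_+^{N-(n-1)/2}/\Gamma(N-(n-1)/2+1)$, the source $S_N$ inherits roughly $N-(n-1)/2$ classical derivatives in $(t,\theta,\omega)$, so it becomes as smooth as we like once $N$ is large.

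Next, let $U(t)$ denote the forward fundamental solution of $\partial_t^2 + Q$, i.e.\ the operator with kernel $H(t)\sin(t\nu)/\nu$; it satisfies $(\partial_t^2 + Q)U = \delta_0(t)\cdot\mathrm{Id}$ with $U(0^+) = 0$. By Proposition \ref{funcdamential}, $E_\nu(0^+) = 0$ and $\partial_t^k E_\nu(0^+) = 0$ for $k \leq 2\nu$, so $\tilde K_N$ matches $U$ at $t=0^+$ to high order; therefore $D_N := \tilde K_N - U$ solves
\begin{equation*}
(\partial_t^2 + Q)D_N = S_N, \qquad D_N(0^+) = \partial_t D_N(0^+) = 0.
\end{equation*}
For $t>0$, differentiating in $t$ yields $\partial_t U(t) = \cos(t\nu)$ (the $\delta(t)\sin(t\nu)/\nu$ term vanishes because $\sin(0\nu)/\nu = 0$), and by definition $\partial_t \tilde K_N = K_N$. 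Hence
\begin{equation*}
R_N = \cos(t\nu) - K_N = -\partial_t D_N \quad\text{for } t>0,
\end{equation*}
so the problem reduces to quantifying the smoothness of $D_N$.

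To control $D_N$, I would apply Duhamel's formula
\begin{equation*}
D_N(t,\cdot,\omega) = \int_0^t \nu^{-1}\sin\bigl((t-s)\nu\bigr)\,S_N(s,\cdot,\omega)\,ds,
\end{equation*}
combined with standard $H^s(Y)$-energy estimates for $\partial_t^2 + Q$, using that $Y$ is compact and that $\nu^2 = Q$ is a self-adjoint elliptic operator on $Y$. Iterating the energy inequality turns the $N-(n-1)/2$ derivatives of $S_N$ into Sobolev regularity of $D_N$ in the $\theta$-variable (uniformly in $t\in[0,\pi]$ and $\omega\in Y$), and the symmetry of the construction gives the same in $\omega$. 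Sobolev embedding on the $(n-1)$-dimensional manifold $Y$ loses roughly $(n-1)/2$ derivatives per variable, and a careful count yields $D_N \in C^{N-n-2}([0,\pi]\times Y\times Y)$, whence $R_N = -\partial_t D_N \in C^{N-n-3}$. The hypothesis $\varepsilon_0 > \pi$ is used to keep $t \in [0,\pi]$ inside the injectivity radius, so that $d_h(\theta,\omega)$ and the $\alpha_k$'s are smooth on the relevant domain, and Theorem \ref{W-F} applies uniformly.

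The main obstacle will be the precise bookkeeping in the regularity count: one must track, step by step, how $t$-, $\theta$-, and $\omega$-derivatives of the Hadamard source $S_N$ survive iterated Duhamel integration, and then balance these against the Sobolev loss on $Y\times Y$ to reach exactly $C^{N-n-3}$. A subsidiary check is the identification $\mathrm{Re}\,\alpha_0(\theta,\omega) = |h(\theta)|^{-1/4}|h(\omega)|^{1/4}$: Theorem \ref{W-F} gives $\alpha_0 = |g(x)|^{-1/4}|g(0)|^{1/4}\exp(i\int\cdots)$ in normal coordinates centered at $\omega$, which matches once $\theta\leftrightarrow x$ and $\omega\leftrightarrow 0$ are identified and the construction is symmetrized between the two variables via the symmetry of the wave kernel.
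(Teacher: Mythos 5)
Your proposal is correct and follows essentially the same route the paper intends: it invokes the Hadamard parametrix of Theorem \ref{W-F}, controls the remainder by Duhamel's formula with energy estimates and Sobolev embedding on the compact manifold $Y$ (exactly the ``energy conservation and Sobolev embedding'' argument the paper delegates to \cite{Sogge}), and passes from the sine to the cosine propagator by differentiating in $t$, which is why $K_N=\sum_k\alpha_k\,\pa_tE_k$. The only slip is cosmetic bookkeeping: since $\dim Y=n-1$, the source term $S_N=(Q\alpha_N)E_N$ carries the exponent $N-\tfrac{n-2}{2}$ rather than $N-\tfrac{n-1}{2}$, which does not affect the conclusion $R_N\in C^{N-n-3}$ for $N>n+3$.
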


\subsection*{Acknowledgements}
X. F. Gao was supported by National Natural Science Foundation of China (No.12301124), China Postdoctoral Science Foundation (No.2024M762972) and Postgraduate Education Reform and Quality Improvement Project of Henan Province (No.YJS2024JC07). C. B. Xu was partially supported by National Natural Science Foundation of China (No.12401296) and  Qinghai Natural Science Foundation (No.2024-ZJ-976).

 \begin{center}

\end{center}

\end{document}